\DeclarePairedDelimiter{\norm}{\lVert}{\rVert}
\DeclareMathAlphabet{\mathbbb}{U}{bbold}{m}{n}
\newtheorem{thm}{Theorem}[section]
\newtheorem{cor}[thm]{Corollary}
\theoremstyle{definition}
\theoremstyle{remark}
\newtheorem{rem}[thm]{Remark}
\numberwithin{equation}{section}
\newcommand{\R}{{\mathbb R}}
\newcommand{\matrice}{\begin{pmatrix}}
\newcommand{\ok}{\end{pmatrix}}
\newcommand\dom{\operatorname{dom}}
\DeclareMathOperator{\Div}{div}
\DeclareMathOperator{\curl}{curl}
\newcommand\p\partial
\newcommand{\I}{{\rm i}}
\newcommand{\la}{\lambda}
\begin{document}
\title[]{A note on the Maxwell's eigenvalues on thin sets}
\thanks{The first author acknowledges support of the INdAM-GNAMPA. The second author acknowledges the support of the INdAM GNSAGA. The second author also aknowledges financial support from the project ``Perturbation problems and asymptotics for elliptic differential equations: variational and potential theoretic methods'' funded by the European Union – Next Generation EU and by MUR-PRIN-2022SENJZ3 and from the project ``Analisi Geometrica e Teoria Spettrale su varietà Riemanniane ed Hermitiane'' of the INdAM GNSAGA}

\author[Provenzano]{Luigi Provenzano}
\address{Dipartimento di Scienze di Base e Applicate per l'Ingegneria, Universit\`a di Roma ``La Sapienza'', Via Scarpa 12 - 00161 Roma, Italy, e-mail: {\sf luigi.provenzano@uniroma1.it}.}
\author[Ferraresso]{Francesco Ferraresso}
\address{Dipartimento di Informatica, Università di Verona, Strada Le Grazie 15, 37134 Verona, Italy, e-mail: {\sf francesco.ferraresso@univr.it}.}

\begin{abstract}
We analyse the Maxwell's spectrum on thin tubular neighborhoods of embedded surfaces of $\mathbb R^3$. We show that the Maxwell's eigenvalues converge to the Laplacian eigenvalues of the surface as the thin parameter tends to zero. To achieve this, we reformulate the problem in terms of the spectrum of the Hodge Laplacian with relative conditions acting on co-closed differential $1$-forms. The result leads to new examples of domains where the Faber-Krahn inequality for Maxwell's eigenvalues fails, examples of domains with any number of arbitrarily small eigenvalues, and underlines the failure of spectral stability under singular perturbations changing the topology of the domain. Additionally, we explicitly produce Maxwell's eigenfunctions on product domains with the product metric, extending previous constructions valid in the Euclidean case.
\end{abstract}

\keywords{Maxwell's equations, Hodge Laplacian, eigenvalue asymptotics, thin domain limit}
\subjclass{58J50, 35P20 , 35Q61, 58A10}

\maketitle

\section{Introduction and statement of the main results} 
Let $\Omega$ be a bounded  domain in $\mathbb R^3$. The second-order reformulation of the time-harmonic Maxwell's system 
\[
\begin{cases}
\curl E = \I \eta H, \quad &\textup{in $\Omega$,} \\
\curl H = -\I \eta E, \quad &\textup{in $\Omega$}, \\
\nu \times E = 0, \quad &\textup{on $\p \Omega$}
\end{cases}
\]
is given by
\begin{equation} \label{eq:curlcurl}
\begin{cases}
\curl\curl E = \la E, \quad &\textup{in $\Omega,$} \\
\nu \times E = 0, \quad &\textup{on $\p \Omega,$}
\end{cases}
\end{equation}
where $\nu$ is the outer normal to $\p \Omega$ and $\la := \eta^2$ is the eigenvalue.  If $\Omega$ is sufficiently regular, e.g., if $\p \Omega$ is Lipschitz, it is well-known that problem \eqref{eq:curlcurl} has $\la = 0$ as eigenvalue of infinite multiplicity and it further admits a sequence of non-negative eigenvalues of finite multiplicity (corresponding to divergence-free eigenfunctions)
\[
0 \leq \la_1(\Omega) \leq \la_2(\Omega) \leq \dots \leq \la_j(\Omega) \leq \dots\nearrow+\infty,
\]
where the eigenvalues are repeated according to their multiplicity.

We are mainly interested in the dependence of $\la_j(\Omega)$ upon the perturbation of the domain $\Omega$; in particular, we will assume that $\Omega$ is a thin domain, described by tubes of size $h$ around smooth embedded surfaces (with or without boundary). More precisely, if $\Sigma$ is a smooth, embedded, orientable, compact surface in $\mathbb R^3$ (with or without boundary), we define, for all $h>0$ sufficiently small, the tube $\Omega_h$ by
\begin{equation}\label{tube}
\Omega_h:=\{x+t\nu(x):t\in(0,h),x\in\Sigma\},
\end{equation}
where $\nu$ is a choice of a unit normal vector field on $\Sigma$, and $\nu(x)$ is the corresponding unit normal vector at $x\in\Sigma$. Note that if $\Sigma$ has a boundary, $\Omega_h$ is just a piecewise smooth, Lipschitz domain. See Figures \ref{fig1} and \ref{fig2}.

\begin{figure}
\includegraphics[width=0.9\textwidth]{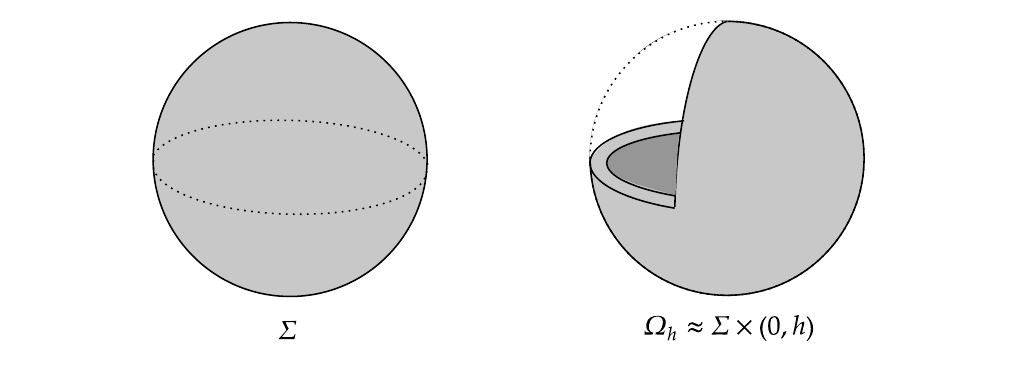}
\caption{Surface $\Sigma$ without boundary and domain $\Omega_h$}
\label{fig1}
\end{figure}

\begin{figure}
\includegraphics[width=0.9\textwidth]{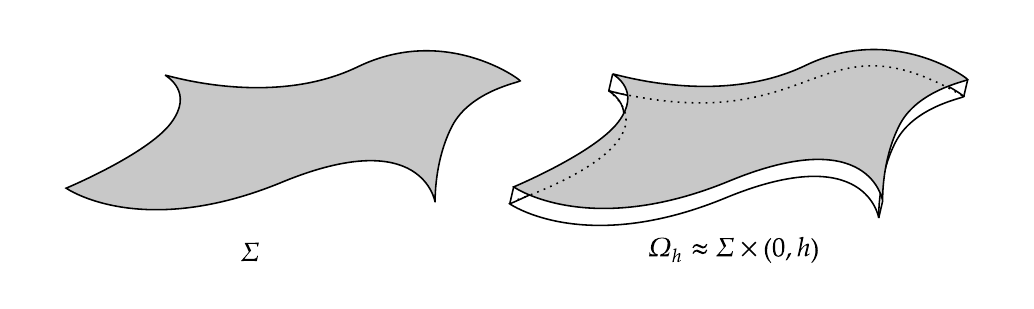}
\caption{Surface $\Sigma$ with boundary and domain $\Omega_h$}
\label{fig2}
\end{figure}

If the boundary is just Lipschitz, problem \eqref{eq:curlcurl}, and in particular, the boundary condition $\nu \times E|_{\p \Omega_h} = 0$, has to be interpreted in a suitable weak sense, see \cite{bu_co_sh}. We now state our main result.

\begin{thm}\label{main_convergence}
Let $\Sigma$ be a smooth, compact, embedded, orientable surface in $\mathbb R^3$, and let $\Omega_h$ be the tube of size $h$ around $\Sigma$ defined by \eqref{tube}. Let $\{\lambda_j(\Omega_h)\}_{j=1}^{\infty}$ be the sequence of Maxwell's eigenvalues. Then, for all $j\in\mathbb N$:
\begin{enumerate}[i)]
\item if $\partial\Sigma=\emptyset$, $\lim_{h\to 0^+}\lambda_j(\Omega_h)=\mu_j$, where $\{\mu_j\}_{j=1}^{\infty}$ are the Laplacian eigenvalues on $\Sigma$;
\item if $\partial\Sigma\ne\emptyset$, $\lim_{h\to 0^+}\lambda_j(\Omega_h)=\mu_j^D$, where $\{\mu_j^D\}_{j=1}^{\infty}$ are the Dirichlet Laplacian eigenvalues on $\Sigma$.
\end{enumerate}
\end{thm}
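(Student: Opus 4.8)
The plan is to recast \eqref{eq:curlcurl} in the language of the Hodge--de Rham complex and then carry out a dimensional reduction as $h\to0^+$. First I would identify a vector field $E$ on $\Omega_h\subset\R^3$ with a $1$-form $\omega$ via the Euclidean metric, so that $\curl E$ corresponds to $*d\omega$ and $\curl\curl E$ to $\delta d\omega$, where $d,\delta$ are the exterior derivative and the codifferential and $*$ is the Hodge star. For any eigenvalue $\la\neq0$ of \eqref{eq:curlcurl}, applying $\delta$ to $\delta d\omega=\la\omega$ forces $\delta\omega=0$; hence the nonzero Maxwell eigenvalues coincide with the eigenvalues of the Hodge Laplacian $\Delta=d\delta+\delta d$ restricted to \emph{co-closed} $1$-forms, subject to the relative (tangential) boundary condition $i^*\omega=0$ on $\p\Omega_h$, which is precisely the weak form of $\nu\times E=0$. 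On co-closed forms the Rayleigh quotient is $\int_{\Omega_h}|d\omega|^2\big/\int_{\Omega_h}|\omega|^2$, and the eigenvalues are captured by the min--max principle over subspaces of co-closed forms satisfying $i^*\omega=0$.

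Next I would pass to Fermi coordinates $(x,t)\in\Sigma\times(0,h)$ given by \eqref{tube} and rescale $t=hs$, $s\in(0,1)$, so that the problem lives on the fixed cylinder $\Sigma\times(0,1)$ equipped with the metric $g_\Sigma+O(h)+h^2\,ds^2$, the $O(h)$ terms being the shape-operator corrections of the parallel surfaces. Splitting $\omega=\alpha+\psi\,dt$ into its tangential part $\alpha$ (an $s$-dependent $1$-form on $\Sigma$) and its normal component $\psi$, one computes schematically
\[
d\omega=d_\Sigma\alpha+dt\wedge(\p_t\alpha-d_\Sigma\psi),\qquad \delta\omega=\delta_\Sigma\alpha-\p_t\psi+O(h),
\]
where $\p_t=h^{-1}\p_s$. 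The mechanism of the reduction is then visible: the relative boundary condition forces $\alpha=0$ on the two faces $s=0,1$, while finiteness of $\int|d\omega|^2$ forces $h^{-1}\p_s\alpha$ to stay bounded, so $\alpha\to0$ in $L^2$; simultaneously the co-closed constraint $\delta\omega=0$ forces $\p_s\psi\to0$, i.e.\ $\psi$ becomes independent of the normal variable. The surviving limit form is therefore $\psi\,dt$ with $\psi=\psi(x)$ a function on $\Sigma$, and its Rayleigh quotient converges to $\int_\Sigma|d_\Sigma\psi|^2\big/\int_\Sigma\psi^2$, the Rayleigh quotient of the scalar Laplace--Beltrami operator on $\Sigma$.

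With this in hand I would prove the two inequalities separately through the min--max principle. For the upper bound $\limsup_{h\to0^+}\la_j(\Omega_h)\le\mu_j$ (resp.\ $\mu_j^D$) I would insert trial forms $\psi\,dt$ built from the first $j$ (Dirichlet) eigenfunctions of the Laplacian on $\Sigma$, taken constant in $s$; these are admissible co-closed forms up to an $O(h)$ correction needed to restore $\delta\omega=0$ exactly, and their Rayleigh quotients converge to the surface eigenvalues. For the lower bound I would take, for each fixed $j$, a sequence of $j$-dimensional min--max subspaces, extract via a compactness argument limit forms of the structure $\psi\,dt$ as above, and show that the limiting Rayleigh quotients dominate those of the surface Laplacian, giving $\liminf_{h\to0^+}\la_j(\Omega_h)\ge\mu_j$. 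The distinction between the two cases is entirely encoded in the lateral boundary $\p\Sigma\times(0,h)$: when $\p\Sigma=\emptyset$ it is absent and $\psi$ is unconstrained there, producing the full (Neumann-in-$s$) Laplacian spectrum $\mu_j$; when $\p\Sigma\neq\emptyset$, the condition $\nu\times E=0$ on the lateral face forces the normal component $\psi$ to vanish on $\p\Sigma$, producing the Dirichlet spectrum $\mu_j^D$.

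The main obstacle will be the lower bound, i.e.\ the compactness together with the uniform control of the curvature corrections. One must show that no energy escapes into the tangential sector or into $s$-oscillations: any family of co-closed forms with Rayleigh quotient bounded uniformly in $h$ must, after the rescaling, concentrate on $s$-independent normal fields, and the $O(h)$ geometric terms in the metric and in $\delta$ must be estimated uniformly so as not to perturb the limiting quadratic form. A secondary point requiring care is that the tangential and $s$-oscillating modes carry eigenvalues of order $h^{-2}$ and that the relative harmonic fields of $\Omega_h$ reduce exactly to the (constant, resp.\ Dirichlet-harmonic) scalar field $\psi$, so that no spurious small eigenvalues appear and the multiplicities match on both sides of the limit. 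Framing all of this as norm-resolvent, or equivalently quadratic-form, convergence on the $h$-dependent Hilbert spaces is what makes the eigenvalue convergence rigorous.
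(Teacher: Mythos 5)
Your proposal is correct in substance but follows a genuinely different route from the paper. The shared part is the reformulation of \eqref{eq:curlcurl} as the eigenvalue problem \eqref{hodge_coclosed} for the Hodge Laplacian on co-closed $1$-forms with relative conditions. From there the paper does no rescaling and no compactness argument: it computes the spectrum of \eqref{hodge_coclosed} on the exact product $(\Sigma\times(0,h),g_\Sigma+dt^2)$ in closed form by separation of variables (Theorems \ref{thm_main_prod} and \ref{thm_main_prod_2}, the Riemannian TE/TM/TEM families), so that the thin limit is read off by inspection: every family except $\mu_k+\eta_1(h)=\mu_k$ (resp.\ $\mu_k^D+\eta_1(h)=\mu_k^D$) diverges like $h^{-2}$ (Corollaries \ref{cor:prod:1} and \ref{cor:prod:2}). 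It then transfers this to the Euclidean tube by a quasi-isometry min--max comparison: with $g_F=\phi^*g_E$ in Fermi coordinates one has \eqref{quasi_isom}, leading to \eqref{estimate_ev}, i.e.\ $(1-Ch)\la_j(h,g_p)\le\la_j(\Omega_h)\le(1+Ch)\la_j(h,g_p)$. The key technical obstruction in that comparison --- co-closedness depends on the metric --- is handled by the substitution $u\mapsto u+dv$ with $v$ solving \eqref{projection}, which leaves $du$ unchanged and obeys $\|dv\|^2_{L^2}\le Ch\|u\|^2_{L^2}$; this is the rigorous form of your ``$O(h)$ correction restoring $\delta\omega=0$'', and in your scheme you would need it on both sides of the min--max (for the competitors in the lower bound as well, not only for the trial fields in the upper bound). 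Your route is instead the classical thin-domain scheme: rescale to $\Sigma\times(0,1)$, prove the upper bound with trial fields $\psi\,dt$, and the lower bound by energy splitting plus compactness. Its core mechanism is sound and can be made rigorous: for co-closed $\omega=\alpha+\psi\,dt$ with relative conditions on the product metric, the cross term in $\|d\omega\|^2_{L^2}$ integrates by parts with a favorable sign (using $\delta_\Sigma\alpha=\p_t\psi$, $\alpha=0$ on the faces, and $\psi=0$ on the lateral boundary), giving
\begin{equation*}
\int_M|d\omega|^2=\int_M|d_\Sigma\alpha|^2+|\p_t\alpha|^2+2|\p_t\psi|^2+|d_\Sigma\psi|^2,
\end{equation*}
so a bounded Rayleigh quotient indeed forces $\|\alpha\|_{L^2}=O(h)$ and $\psi$ asymptotically independent of the normal variable, as you claim; your identification of the lateral relative condition with the Dirichlet condition for $\psi$, and your bookkeeping of harmonic fields (only $dt$ when $\p\Sigma=\emptyset$, matching $\mu_1=0$; none when $\p\Sigma\ne\emptyset$; the topological modes of order $h^{-2}$), are also correct. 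What the paper's approach buys is explicit eigenfunctions, the full TE/TM/TEM structure, an $O(h)$ rate, and the corollaries on small eigenvalues and spectral instability; what yours buys is independence from exact solvability of the product problem (hence potential generality: variable thickness, non-product limit metrics), at the price of actually executing the lower-bound compactness --- uniform control of the $O(h)$ curvature corrections in $d$, $\delta$ and the volume form, and persistence of $j$-dimensional orthogonal families in the limit --- which is precisely the work the paper's explicit diagonalization sidesteps.
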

An immediate consequence of Theorem \ref{main_convergence} is that we can always find examples of domains $\Omega$ with any number of arbitrarily small eigenvalues $\la_j(\Omega)$, in the class of domains with prescribed volume $|\Omega|$ (see \cite{anne_tak} for a recent related result).

\begin{cor}\label{cor_small}
For any $N\in\mathbb N$  and $\epsilon>0$ there exists a domain $\Omega$  with $|\Omega|=1$ and $\lambda_j(\Omega)\leq\epsilon$ for all $j=1,...,N$. Moreover, the domain $\Omega$ can be chosen to be homeomorphic to a ball.
\end{cor}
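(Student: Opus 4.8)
The plan is to apply Theorem~\ref{main_convergence}(ii) to a flat disk, whose Dirichlet Laplacian eigenvalues can be made arbitrarily small by scaling, and then to rescale the resulting tube so as to meet the volume constraint.

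First, I would fix the shape. Let $D_R$ be the flat open disk of radius $R$ in the plane $\{x_3=0\}\subset\mathbb R^3$, regarded as a smooth, compact, embedded, orientable surface with boundary, and let $\Omega_{R,h}$ be the tube of size $h$ around $D_R$ defined in \eqref{tube}. Since $D_R$ is contractible, $\Omega_{R,h}$ is a thin, coin-shaped region that is homeomorphic to a ball; this observation will immediately yield the final assertion of the statement.

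Next, I would use the scaling behaviour of the two spectra. On the one hand, the Dirichlet eigenvalues of the disk scale as $\mu_j^D(D_R)=\mu_j^D(D_1)/R^2$, so I can choose $R=R(N,\epsilon)$ so large that $\mu_j^D(D_R)\le \epsilon/2$ for every $j=1,\dots,N$. On the other hand, Theorem~\ref{main_convergence}(ii) gives $\lim_{h\to0^+}\lambda_j(\Omega_{R,h})=\mu_j^D(D_R)$ for each fixed $j$; since there are only finitely many indices $j\le N$ to control, there exists $h_0>0$ such that $\lambda_j(\Omega_{R,h})\le\epsilon$ for all $0<h<h_0$ and all $j=1,\dots,N$.

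Finally, I would enforce $|\Omega|=1$ by a similarity. With $R$ fixed, the tube satisfies $|\Omega_{R,h}|\to0$ as $h\to0^+$, so after possibly decreasing $h$ I may assume $V:=|\Omega_{R,h}|\le1$. Setting $\Omega:=V^{-1/3}\,\Omega_{R,h}$ gives $|\Omega|=1$, while the homogeneity of $\curl\curl$ under dilations yields $\lambda_j(c\,\Omega')=c^{-2}\lambda_j(\Omega')$; taking $c=V^{-1/3}\ge1$ shows $\lambda_j(\Omega)=V^{2/3}\lambda_j(\Omega_{R,h})\le\lambda_j(\Omega_{R,h})\le\epsilon$ for $j=1,\dots,N$. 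As a dilation of $\Omega_{R,h}$, the domain $\Omega$ is still homeomorphic to a ball. I do not expect a genuine obstacle beyond coordinating the two limits: the key point is that shrinking $h$ drives the volume to zero, so that the normalizing dilation has factor $c\ge1$ and can therefore only decrease the eigenvalues, never push them back above $\epsilon$.
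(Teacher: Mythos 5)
Your proof is correct, and it takes a genuinely different route from the paper's. The paper splits the statement in two: for the first claim it takes $\Sigma$ to be a closed Cheeger dumbbell with $N-1$ thin passages (so that $\mu_1,\dots,\mu_N<\epsilon/2$ by the neck geometry), applies part (i) of Theorem \ref{main_convergence}, and rescales the tube to unit volume exactly as you do; for the ball-homeomorphic claim it then removes a small geodesic disk $B_\delta$ from the dumbbell and invokes the spectral stability of the Laplacian under removal of a small disk (the reference \cite{courtois_holes}) before applying part (ii). Your construction replaces all of this with a single, self-contained example: a large flat disk $D_R$, whose Dirichlet eigenvalues are small purely by the scaling $\mu_j^D(D_R)=\mu_j^D(D_1)/R^2$, followed by part (ii) of Theorem \ref{main_convergence} and the same volume-normalizing dilation (your observation that shrinking $h$ forces $V\le 1$, so the dilation factor is $\ge 1$ and can only decrease eigenvalues, is exactly the point that makes the two limits compatible, and the paper uses it implicitly too). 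What you lose relative to the paper is only illustrative: the dumbbell construction exhibits the phenomenon on topologically non-trivial domains (tubes around closed surfaces, which are not balls) and shows the small eigenvalues can be produced by thin necks at fixed diameter rather than by large diameter; what you gain is economy, since one construction proves both assertions at once, with no external citation needed, and indeed for your flat cylinder $D_R\times(0,h)$ the eigenvalues are even known explicitly by Theorem \ref{thm_main_prod_2_INTRO}, so you could bypass the limit $h\to 0^+$ altogether.
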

\begin{proof}
Let us first prove the result in the case of $\Omega$ not homeomorphic to a ball. Let $\Sigma$ be a Cheeger's dumbbell \cite[p. 79]{chavel} with $N-1$ thin passages such that $\mu_1,...,\mu_N<\frac{\epsilon}{2}$ (in particular, $\mu_1=0$).
Choose a small enough $h>0$ guaranteeing that $\lambda_j(\Omega_h)<\epsilon$ for $j=1,...N$. Now, for $h$ small, $|\Omega_h|\approx |\Sigma|h$ and we can possibly reduce $h$ in such a way that $|\Omega_h|<1$. Let $\Omega:=\frac{\Omega_h}{|\Omega_h|^{1/3}}$, so that $|\Omega|=1$. Then $\lambda_j(\Omega)=|\Omega_h|^{2/3}\lambda_j(\Omega_h)<\epsilon$ for all $j=1,...,N$.\\
To produce a domain homeomorphic to a ball, replace $\Sigma$ in the above construction with $\Sigma_{\delta}:=\Sigma\setminus B_{\delta}$, where $B_{\delta}\subset\Sigma$ is a small geodesic disk with $\delta>0$, chosen in such a way that the Dirichlet eigenvalues $\mu_j^D$ in $\Sigma_{\delta}$ satisfy $\mu_1^D,...,\mu_N^D<\frac{2\epsilon}{3}$. This is possible because the Dirichlet spectrum of $\Sigma_{\delta}$ converges to the spectrum of the Laplacian on $\Sigma$ as $\delta\to 0^+$, see e.g., \cite{courtois_holes}. The rest of the construction is as in the previous part of the proof. It is sufficient to note that a thin tube around $\Sigma_{\delta}$ (or, equivalently, $\Sigma_{\delta}\times(0,h)$) is homeomorphic to a ball.
\end{proof}

Corollary \ref{cor_small} implies that a Faber-Krahn inequality cannot hold for the first Maxwell's eigenvalue, nor for other functions of the eigenvalues like the sum or the product (or other elementary symmetric functions) of the first $N$ eigenvalues, as already highlighted in \cite{lam_zacca} (see also \cite{savo_convex}). 

\medskip

Combining Corollary \ref{cor_small} with the examples of convex domains of fixed volume and arbitrarily large first eigenvalue (see e.g., \cite{lam_zacca,savo_convex}, or simply take $(0,\delta)\times(0,\delta)\times(0,1/\delta^2)$ which has large first eigenvalue when $\delta$ is small by Theorem \ref{thm_main_prod}) we conclude that for any $N\in\mathbb N$ and any $\epsilon, M>0$, there exist domains $\Omega,\omega$ homeomorphic to a ball with $|\Omega|=|\omega|=1$, such that $\lambda_j(\Omega)>M$ and $\lambda_j(\omega)<\epsilon$, for all $j=1,...,N$.

\medskip

To prove Theorem \ref{main_convergence}, it is convenient to change perspective and interpret problem \eqref{eq:curlcurl} as an eigenvalue problem for the Hodge Laplacian acting on co-closed differential $1$-forms with relative boundary conditions on a Riemannian manifold $(M,g)$ (see also problem \eqref{hodge_coclosed}):
\begin{equation}\label{hodge_INTRO}
\begin{cases}
\Delta u=\lambda u\,, & {\rm in\ }M\\
\delta u=0\,, & {\rm in\ } M\\
i^*u=0\,, &{\rm on\ }\partial M,
\end{cases}
\end{equation}
where now $u$ is a differential $1$-form on $M$, $\Delta=d\delta+\delta d$ is the Hodge Laplacian associated with the metric $g$ acting on differential forms,  $d$ is the exterior derivative, $\delta$ is the codifferential associated with the metric $g$ and $i:\partial M\to M$ is the canonical inclusion. We refer to Section \ref{sec:geom_pre} for more details. When $M$ is a bounded domain in $\mathbb R^3$ and the metric is the Euclidean one, problems \eqref{eq:curlcurl} restricted to divergence-free fields and \eqref{hodge_INTRO} coincide (under the canonical identification of vector fields and $1$-forms). 

It is worth recalling the following fact. For small $h>0$, the domain $\Omega_h$ with the Euclidean metric is quasi-isometric to the manifold $M=\Sigma\times(0,h)$ with the product metric $g_p=g_{\Sigma}\times dt^2$, where $g_{\Sigma}$ is the induced metric on $\Sigma$ from the ambient Euclidean space. Using the product structure of the metric, we are able to explicitly describe  all the eigenvalues of Problem \eqref{hodge_INTRO} in $M=\Sigma\times(0,h)$ and the associated eigenfunctions. Concerning the eigenvalues, we have the following (see Theorems \ref{thm_main_prod} and \ref{thm_main_prod_2})

\begin{thm}\label{thm_main_prod_INTRO}
Let $M=\Sigma\times(0,h)$, $h>0$, $(\Sigma,g_{\Sigma})$ be a compact Riemannian surface (without boundary) and $g_p=g_{\Sigma}+dt^2$ be the product metric on $M$. Then the spectrum of \eqref{hodge_INTRO} on $(M,g_p)$ is given by the union of the following four families:
\begin{enumerate}[i)]
\item $\mu_k+\eta_j(h)$, $k\geq 2$, $j\geq 1$;
\item $\mu_k+d_j(h)$, $k\geq 2$, $j\geq 1$;
\item $d_j(h)$, $j\geq 1$, each repeated $2\gamma$ times;
\item $0$ with multiplicity $1$.
\end{enumerate}
Here $\mu_k$ are the eigenvalues of the Laplacian on $(\Sigma,g_{\Sigma})$ (with multiplicities), $\eta_j(h),d_j(h)$ are the Neumann and Dirichlet eigenvalues on $(0,h)$, and $\gamma$ is the genus of the surface. 
\end{thm}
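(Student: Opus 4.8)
The plan is to exploit the product structure of $(M,g_p)$ to separate variables and reduce the partial differential eigenvalue problem to a family of ordinary differential problems on $(0,h)$. First I would write an arbitrary $1$-form on $M=\Sigma\times(0,h)$ as
\[
u=\alpha+f\,dt,
\]
where $\alpha=\alpha(\cdot,t)$ is a $t$-dependent $1$-form on $\Sigma$ and $f=f(\cdot,t)$ is a $t$-dependent function on $\Sigma$. For the product metric the Hodge Laplacian respects this splitting and acts as $\Delta_\Sigma-\partial_t^2$ on each summand, so $\Delta_M u=\lambda u$ decouples into
\[
\Delta_\Sigma\alpha-\partial_t^2\alpha=\lambda\alpha,\qquad \Delta_\Sigma f-\partial_t^2 f=\lambda f.
\]
Likewise $\delta_M u=\delta_\Sigma\alpha-\partial_t f$, so the co-closed constraint $\delta u=0$ reads $\delta_\Sigma\alpha=\partial_t f$, while the relative condition $i^*u=0$ on $\Sigma\times\{0,h\}$ becomes $\alpha|_{t=0}=\alpha|_{t=h}=0$, since $i^*dt=0$ on each face.

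Next I would diagonalise on $\Sigma$ via the Hodge decomposition $\Omega^1(\Sigma)=d\Omega^0(\Sigma)\oplus\delta\Omega^2(\Sigma)\oplus\mathcal H^1(\Sigma)$, expanding $\alpha$ in the exact forms $d\phi_k$, the co-exact forms $\star d\phi_k$ (both with $\Delta_\Sigma$-eigenvalue $\mu_k$, $k\ge2$) and the $2\gamma$ harmonic forms $h_i$, and expanding $f$ in the eigenfunctions $\phi_k$. Because $\delta_\Sigma(\star d\phi_k)=\delta_\Sigma h_i=0$ while $\delta_\Sigma d\phi_k=\mu_k\phi_k$, the constraint couples only the exact part of $\alpha$ to $f$, through $\mu_k a_k=f_k'$, and leaves the co-exact and harmonic parts free. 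This splits the analysis into three independent sectors, which I would treat in turn.

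In the co-exact sector $u=b_k(t)\star d\phi_k$ the constraint is void and the relative condition forces $b_k(0)=b_k(h)=0$; solving $-b_k''=(\lambda-\mu_k)b_k$ with Dirichlet data gives $\lambda=\mu_k+d_j(h)$, which is family (ii). The harmonic sector $u=c_i(t)h_i$ is identical with $\mu=0$, yielding $\lambda=d_j(h)$ with multiplicity $2\gamma$, family (iii). The coupled sector is the crux: here $a_k=f_k'/\mu_k$, so $f_k$ is the primary unknown and the Dirichlet condition $a_k(0)=a_k(h)=0$ translates into the \emph{Neumann} condition $f_k'(0)=f_k'(h)=0$; solving $-f_k''=(\lambda-\mu_k)f_k$ with Neumann data gives $\lambda=\mu_k+\eta_j(h)$, family (i). The lowest Neumann mode ($\eta=0$, constant $f_k$, $a_k=0$) corresponds to the eigenform $\phi_k\,dt$, while running the same computation on the constant mode $\phi_1$ isolates the unique relative harmonic field $u=dt$ and gives the eigenvalue $0$ of multiplicity one, family (iv).

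Finally I would establish completeness: the Hodge eigenforms on $\Sigma$ together with the Dirichlet and Neumann bases on $(0,h)$ span the $L^2$ space of co-closed $1$-forms satisfying the relative condition, so no eigenvalue is missed and the four families exhaust the spectrum. The main obstacle is the correct bookkeeping of boundary conditions in the coupled sector -- recognising that the co-closed constraint promotes $f$ to a Neumann problem and thereby produces family (i), including the extra value $\mu_k$ that is absent from (ii) -- together with checking that the relative self-adjoint realisation of $\Delta$ on co-closed forms imposes exactly $i^*u=0$ and no further natural condition once $\delta u=0$ is enforced.
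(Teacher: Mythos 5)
Your proposal is correct, and it produces exactly the same eigenforms as the paper: your coupled sector is the paper's first family, since $\delta d(w_k v_j\,dt)=v_j'\,d_\Sigma w_k+\mu_k w_k v_j\,dt$ matches your pair $(a_k,f_k)=(v_j',\mu_k v_j)$ with $\mu_k a_k=f_k'$; your co-exact sector is its second family $\star d(w_k u_j\,dt)=u_j\,\star d_\Sigma w_k$; your harmonic sector is its third; and the constant mode gives $dt$. The organisation, however, is genuinely different. The paper argues by synthesis: it posits the ansätze $\delta d(f\,dt)$, $\star d(f\,dt)$, $u(t)H(x)$, checks by separation of variables that they solve \eqref{hodge_INTRO}, and then proves exhaustiveness in a separate step, assuming a co-closed relative form $\omega=\omega_\Sigma+\omega_t\,dt$ is orthogonal to all four families and testing against each family in turn to force $\omega_t=0$, then $d_\Sigma\omega_\Sigma=0$, then $\omega_\Sigma=0$. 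You argue by analysis: you decompose an arbitrary co-closed relative form at the outset (splitting $u=\alpha+f\,dt$ and applying the Hodge decomposition to $\alpha(\cdot,t)$ on $\Sigma$), observe that co-closedness couples only the exact part of $\alpha$ to $f$, and solve the resulting ODE problems sector by sector; your key bookkeeping point --- that the relative condition $a_k(0)=a_k(h)=0$ becomes a Neumann condition for $f_k$ --- is precisely what generates the first family, including the finite limits $\mu_k+\eta_1(h)=\mu_k$. What your route buys is that completeness is nearly automatic, since the families are derived rather than guessed; what it requires in exchange is the functional-analytic care that the paper's testing argument sidesteps: the sectors must be invariant under $\Delta$ (true for the product metric), the expansions must converge in $L^2$, and --- the one point you leave implicit --- within a coupled sector the eigenforms generated by the Neumann basis $\{v_j\}$ must span all co-closed forms of that sector. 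This last point is a one-line check: two forms of the sector generated by $f$ and $g$ have $L^2(M)$ inner product $\int_0^h\left(f'g'/\mu_k+fg\right)dt$, so orthogonality of $u_f$ to every $u_{v_j}$ gives $\left(1+\eta_j(h)/\mu_k\right)\int_0^h f v_j\,dt=0$ for all $j$, hence $f=0$. Finally, the sign in your identity $\delta u=\delta_\Sigma\alpha-\partial_t f$ is convention-dependent (the paper uses the opposite one), but this is immaterial: either sign ties $a_k$ to $f_k'$ and yields the same Neumann condition.
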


\begin{thm}\label{thm_main_prod_2_INTRO}
Let $M=\Sigma\times(0,h)$, $h>0$, $(\Sigma,g_{\Sigma})$ be a compact Riemannian surface with non-empty boundary $\partial\Sigma$ and $g_p=g_{\Sigma}+dt^2$ be the product metric on $M$. Then the spectrum of \eqref{hodge_INTRO} on $(M,g_p)$ is given by the union the following three sequences:
\begin{enumerate}[i)]
\item $\mu_k^D+\eta_j(h)$, $k,j\geq 1$;
\item $\mu_k^N+d_j(h)$, $k \geq 2$, $j\geq 1$;
\item $d_j(h)$, $j\geq 1$, each repeated $2\gamma+b$ times.
\end{enumerate}
Here $\mu_k^D,\mu_k^N$ are the eigenvalues of the Laplacian on $(\Sigma,g_{\Sigma})$ with Dirichlet and Neumann boundary conditions, respectively (with multiplicities), $\eta_j(h),d_j(h)$ are the Neumann and Dirichlet eigenvalues on $(0,h)$,  $\gamma$ is the genus of the surface, and $b+1$ is the number of connected components of $\partial\Sigma$.
\end{thm}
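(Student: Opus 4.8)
The plan is to diagonalize the operator by separation of variables, exploiting that for the product metric $g_p=g_\Sigma+dt^2$ the Hodge Laplacian splits. I write a $1$-form on $M=\Sigma\times(0,h)$ as $u=\alpha+f\,dt$, where for each fixed $t$ the tangential part $\alpha$ is a $1$-form on $\Sigma$ and $f$ is a function on $\Sigma$. From the product formulas for the exterior derivative and the codifferential one computes
\begin{equation*}
\Delta_M u=(\Delta_\Sigma\alpha-\partial_t^2\alpha)+(\Delta_\Sigma f-\partial_t^2 f)\,dt,\qquad \delta_M u=\delta_\Sigma\alpha-\partial_t f,
\end{equation*}
where $\Delta_\Sigma$ is the Hodge Laplacian on $1$-forms (resp.\ the Laplace--Beltrami operator on functions). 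Hence the eigenvalue equation decouples as $\Delta_\Sigma\alpha-\partial_t^2\alpha=\lambda\alpha$ and $\Delta_\Sigma f-\partial_t^2 f=\lambda f$, subject to the co-closedness constraint $\delta_\Sigma\alpha=\partial_t f$. The first step is to rewrite $i^*u=0$ on the two parts of $\partial M$: on $\Sigma\times\{0,h\}$ the pullback annihilates $dt$, forcing $\alpha|_{t=0}=\alpha|_{t=h}=0$, whereas on the lateral boundary $\partial\Sigma\times(0,h)$ it forces simultaneously $i^*_{\partial\Sigma}\alpha=0$ and $f|_{\partial\Sigma}=0$.

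Next I would observe that these conditions place $\alpha(\cdot,t)$ exactly in the domain of the Hodge Laplacian on $\Sigma$ with relative boundary conditions: indeed $i^*_{\partial\Sigma}\alpha=0$ is one of the two relative conditions, and the second, $i^*_{\partial\Sigma}(\delta_\Sigma\alpha)=0$, follows from the constraint together with $f|_{\partial\Sigma}=0$, since $\delta_\Sigma\alpha|_{\partial\Sigma}=\partial_t f|_{\partial\Sigma}=0$. I can therefore expand $\alpha$ in the eigenbasis of the relative Hodge Laplacian on $1$-forms, whose spectral decomposition reads $\alpha=d_\Sigma\phi+*_\Sigma d_\Sigma\psi+\kappa$: the exact eigenforms $d_\Sigma\phi_k$ correspond to Dirichlet eigenfunctions $\mu_k^D$ (because $i^*_{\partial\Sigma}(\delta_\Sigma d_\Sigma\phi)=0$ forces $\phi|_{\partial\Sigma}=0$), the co-exact eigenforms $*_\Sigma d_\Sigma\psi_k$ correspond to Neumann eigenfunctions $\mu_k^N$ with $k\geq2$ (because $i^*_{\partial\Sigma}(*_\Sigma d_\Sigma\psi)=0$ is equivalent to $\partial_\nu\psi=0$, and the constant is killed by $*_\Sigma d_\Sigma$), and the kernel consists of the tangential harmonic fields $\kappa$ with $d_\Sigma\kappa=\delta_\Sigma\kappa=0$ and $i^*_{\partial\Sigma}\kappa=0$. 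Since $\Delta_\Sigma$ commutes with $d_\Sigma$ and $*_\Sigma$, each sector carries a definite surface eigenvalue.

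In each sector the remaining $t$-dependence solves a one-dimensional problem on $(0,h)$. For the co-exact and harmonic sectors $\delta_\Sigma\alpha=0$, so the constraint gives $\partial_t f=0$; taking $f=0$ and using $\alpha|_{t=0,h}=0$ yields Dirichlet data, hence the contributions $\mu_k^N+d_j(h)$ ($k\geq2$) and, for the harmonic kernel, $d_j(h)$. For the exact sector the constraint $\Delta_\Sigma\phi=\partial_t f$ couples $\phi$ and $f$ along the same Dirichlet eigenfunction; eliminating one variable shows that the top/bottom condition $\alpha|_{t=0,h}=0$ becomes $\partial_t f|_{t=0,h}=0$, i.e.\ Neumann data for $f$, producing $\mu_k^D+\eta_j(h)$ (with $\eta_1(h)=0$ giving the purely tangential modes $u=\phi_k\,dt$). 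Reading off the three families reproduces the statement; completeness follows because the Hodge--Morrey--Friedrichs decomposition on $\Sigma$ together with the Dirichlet/Neumann eigenbases on $\Sigma$ and on $(0,h)$ yields an orthonormal basis of co-closed $1$-forms, each basis element lying in exactly one family, so no eigenvalue is omitted.

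It remains to compute the multiplicity in iii), for which I would identify the space of tangential harmonic fields with the relative cohomology $H^1(\Sigma,\partial\Sigma;\mathbb R)$; by Lefschetz duality $H^1(\Sigma,\partial\Sigma)\cong H_1(\Sigma)$, whose dimension is $2\gamma+b$ for a genus-$\gamma$ surface with $b+1$ boundary components, and each such field carries the full Dirichlet ladder $d_j(h)$. I expect the main obstacle to be the careful bookkeeping of boundary conditions: proving rigorously that $i^*_{\partial\Sigma}\alpha=0$ together with co-closedness produces precisely the Dirichlet/Neumann dichotomy between the exact and co-exact sectors, and that the coupling $\Delta_\Sigma\phi=\partial_t f$ swaps Dirichlet and Neumann data in $t$ between $\alpha$ and $f$; and, secondly, the identification of the harmonic sector with relative cohomology, which requires Hodge--Morrey--Friedrichs theory on a manifold with boundary rather than the closed Hodge theorem available in the case $\partial\Sigma=\emptyset$.
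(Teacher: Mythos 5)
Your proposal is correct, and it reaches the theorem by a genuinely different route than the paper. The paper argues by ansatz: it posits the three families $\delta d(w^D_k v_j\,dt)$, $\star d(w^N_k u_j\,dt)$, $H_k u_j$, checks by separation of variables that each solves the problem, and then proves completeness a posteriori, by showing that any co-closed relative $1$-form orthogonal to all three families vanishes (an integration-by-parts/Stokes argument); moreover, in the paper the $b$ extra copies of $d_j(h)$ attached to the boundary components do not appear in the harmonic family at first, but emerge inside the first-family analysis through harmonic functions that are locally constant on $\partial\Sigma$, and are then moved by hand into family (iii). You instead decompose an \emph{arbitrary} eigenform as $u=\alpha+f\,dt$, observe that co-closedness together with $f|_{\partial\Sigma}=0$ places $\alpha(\cdot,t)$ in the domain of the relative Hodge Laplacian on $\Sigma$, and invoke the spectral resolution of that two-dimensional operator: exact eigenforms $\leftrightarrow$ Dirichlet eigenfunctions, co-exact $\leftrightarrow$ Neumann eigenfunctions ($k\geq 2$), kernel $\leftrightarrow H^1(\Sigma,\partial\Sigma;\mathbb{R})\cong\mathbb{R}^{2\gamma+b}$. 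The constraint $\delta_\Sigma\alpha=\partial_t f$ then couples only the exact sector to $f$, and the elimination $c_k\propto e_k'$ correctly converts the Dirichlet condition $\alpha|_{t=0,h}=0$ into Neumann data for the $t$-factor of $f$, while the co-exact and harmonic sectors carry the Dirichlet ladder; this reproduces families (i)--(iii) exactly, including the modes $\phi_k\,dt$ at $\eta_1(h)=0$ (these are purely normal, not tangential, a harmless slip of terminology). What your route buys: derivation and completeness become a single structural statement (expansion in a product eigenbasis), and the multiplicity $2\gamma+b$ in (iii) falls out of relative cohomology in one stroke instead of being assembled from two separate sources. What it costs: you must import the spectral decomposition of the relative Hodge Laplacian on $1$-forms on a surface with boundary (Hodge--Morrey--Friedrichs theory), which the paper deliberately avoids by using only scalar eigenbases and Stokes' theorem; and your completeness step still requires the verification, which you rightly flag, that the fiberwise expansion coefficients are regular enough in $t$ to solve the ODEs and inherit the boundary conditions. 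Both routes produce the same eigenfunctions; yours makes the TE/TM/TEM structure a direct shadow of the two-dimensional Hodge decomposition of $\Sigma$.
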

We note that in both cases, as $h\to 0^+$, all eigenvalues diverge to $+\infty$ except $\mu_k+\eta_1(h)$ ($k\geq 1$) if $\partial\Sigma=\emptyset$ and $\mu_k^D+\eta_1(h)$ ($k\geq 1$) if $\partial\Sigma\ne\emptyset$. In fact $\eta_1(h)=0$ for all $h$. The quasi-isometry between $(M, g_p)$ and $(M, g_E)$,
where $g_p$ is the product metric and $g_E$ is the Euclidean metric, finally allows us to conclude that the corresponding eigenvalues are at most at distance $Ch$ from each other, concluding therefore the proof of Theorem \ref{main_convergence}.


\medskip

Theorems \ref{thm_main_prod_INTRO} and \ref{thm_main_prod_2_INTRO} should be compared with the case of ``flat''  product domains of $\mathbb R^3$ of the form $\Omega = \omega \times I$, where $\omega \subset \R^2$ and $I \subset \R$. For such domains, it is well-known that the eigenvalues $\la_j(\Omega)$ belong to three different families (in the following list we keep the notation of \cite{coda}): 
\begin{enumerate}[i)]
\item the TE-modes, $\la^{\rm TE}_{jm}(\Omega) = \la_j(-\Delta_\omega^{\rm neu}) + \la_m(-\Delta_I^{\rm dir}) $, $j\geq 2$, $m \geq 1$;
\item the TM-modes, $\la^{\rm TM}_{jm}(\Omega) = \la_j(-\Delta_\omega^{\rm dir}) + \la_m(-\Delta_I^{\rm neu}) $, $j\geq 1$, $m \geq 1$;
\item when $\omega$ is not simply connected, and $\p \omega$ has $D$ connected components, the TEM modes: $\la^{\rm TEM}_{dm}(\Omega) = \la_m(-\Delta_I^{\rm dir}) $, $1\leq d \leq D-1$, $m \geq 1$;
\end{enumerate}

Theorems \ref{thm_main_prod_INTRO} and \ref{thm_main_prod_2_INTRO} say that the TE-TM-TEM description of the eigenvalues given in \cite{coda} continues to hold in the Riemannian setting, that is, when we replace $\omega$ with a Riemannian surface $\Sigma$, therefore generalising the construction valid for straight cylinders to possibly curved ones. The only difference is that the Maxwell's eigenvalues will now be described in terms of the eigenvalues of the Laplacian on the surface  $\Sigma$ (with Dirichlet or Neumann boundary conditions on $\p \Sigma$ when $\p \Sigma \neq \emptyset$, as in the flat case).

From the description in \cite{coda}, we easily see that the limiting spectrum of the Maxwell's operator on the flat cylinder $\omega\times I$ as $|I| \to 0^+$ coincides with the Dirichlet spectrum of the Laplacian on $\omega$: this is a particular case of Theorem \ref{main_convergence}.

\medskip
Another implication of Theorem \ref{main_convergence} is that there is no spectral stability under singular domain perturbation when a change of topology is involved. More precisely, let $\Sigma_{\delta}=\omega\setminus B_{\delta}\subset\mathbb R^2$, where $\omega$ is a simply connected planar domain, $B_{\delta}$ is a disk of radius $\delta$ centered at some $x\in\omega$, $B_{\delta}\subset\omega$ for all $\delta>0$ sufficiently small. Let $\Omega_{h,\delta}=\Sigma_{\delta}\times(0,h)$. Let $h>0$ be fixed. Then the Maxwell's eigenvalues on $\Omega_{h,\delta}$ are just those given by Theorem \ref{thm_main_prod_2_INTRO}:
\begin{enumerate}[i)]
\item $\mu_k^D(\delta)+\frac{\pi^2(j-1)^2}{h^2}$, $k,j\geq 1$, where $\mu_k^D(\delta)$ are the Dirichlet eigenvalues on $\Sigma_{\delta}$;
\item $\mu_k^N(\delta)+\frac{\pi^2j^2}{h^2}$, $k\geq 2$, $j\geq 1$, where $\mu_k^N(\delta)$ are the Neumann eigenvalues on $\Sigma_{\delta}$;
\item $\frac{\pi^2j^2}{h^2}$, $j\geq 1$.
\end{enumerate}
When $\delta\to 0^+$, $\Omega_{h,\delta}$ converges (in the sense of Hausdorff convergence) to $\Omega_h=\omega\times(0,h)$. The Maxwell's spectrum on $\Omega_h$ is given by Theorem \ref{thm_main_prod_2_INTRO}; however, we note that, since $\omega$ is simply connected, we do not have the third family of eigenvalues:
\begin{enumerate}[i)]
\item $\mu_k^D+\frac{\pi^2(j-1)^2}{h^2}$, $k,j\geq 1$, where $\mu_k^D$ are the Dirichlet eigenvalues on $\omega=\Sigma_0$;
\item $\mu_k^N+\frac{\pi^2j^2}{h^2}$, $k\geq 2$, $j\geq 1$, where $\mu_k^N$ are the Neumann eigenvalues on $\omega=\Sigma_0$.
\end{enumerate}
The first two families of eigenvalues behave continuously in $\delta$  (this follows from the spectral stability of the Dirichlet and Neumann eigenvalues in Euclidean domains upon removal of a small ball). On the contrary, the eigenvalues of the third family clearly admit a limit as $\delta\to 0^+$ (they do not depend on $\delta$), but the limits are not Maxwell's eigenvalues in the limit domain.

Associated with the families of eigenvalues in Theorems \ref{thm_main_prod_INTRO} and \ref{thm_main_prod_2_INTRO}, there are families of eigenfunctions that we describe more explicitly in Theorems \ref{thm_main_prod} and \ref{thm_main_prod_2}. In these theorems the eigenfunctions are interpreted as eigenfunctions of the Hodge Laplacian (problem \ref{hodge_coclosed}), that is, they are $1$-forms. Finally, we prove that  the eigenfunctions in $\Omega_h$ and the limit eigenfunctions in $\Sigma$ converge in a suitable sense as $h\to 0^+$, see Theorem \ref{thm:eigenfunctions} and Corollary \ref{cor:eigenfunctions}.

\medskip

The analysis of eigenvalue problems for differential operators on thin domains is a classical topic that has experienced a noticeable growth in recent years, see e.g., \cite{arrieta_mpereira,ArrVil2,bor_freitas,BCL,grieser,krej_DN,krej_2}
and references therein. Our analysis was inspired by the well-known result in \cite{schatzman}: the Neumann eigenvalues of a thin tube around a closed embedded hypersurface in $\mathbb R^n$ converge to the eigenvalues of the Laplacian on the surface. Since then, the analysis of the behavior of the spectrum in the thin limit turned out to be useful in the study of many other spectral problems, e.g., the hot spot conjecture \cite{krej_tus}, the clamped plate equation \cite{buoso_ferr}, Navier-Stokes equations \cite{miura2,miura3,miura1}, quantum waveguides \cite{post_exner1,post_exner2,post_exner3,rub_scha}. It is worth mentioning the recent paper \cite{LotOB}, in which the authors perform an asymptotic analysis of the spectrum of the Dirac operator in tubes around hypersurfaces in $\mathbb R^n$. When $n=3$, these are the domains considered in the present paper. When the tube shrinks to the surface, they are able to identify the effective Schr\"odinger operator driving the dynamics, and write an asymptotic expansion of the eigenvalues. From the geometric point of view, the analysis of the spectrum of the Hodge Laplacian acting on $p$-forms on domains with thin parts has been considered by various authors, see e.g., \cite{anne_colb,anne_post,anne_tak}. However, also from the geometric point of view, we were not aware of a result in the spirit of \cite{schatzman} for $p$-forms. This is the main motivation of the present note, which focuses on $p=1$ due to the relation of the problem on forms with the Maxwell's problem. We finally remark that in the last ten years there has been an upsurge of interest in the connection between the spectrum of the Maxwell's operator and the underlying geometry, mainly in the Euclidean setting, see, for instance \cite{BFMT, ZaccArX, FM24, Fil2, Fil1}, where, for instance, the role of the topology of the domain in the spectral properties of the Maxwell's system prominently appears. 

\medskip

The present note is organised as follows. Section \ref{sec:geom_pre} contains a few geometric preliminaries and the description of the connection between problems \eqref{eq:curlcurl} and \eqref{hodge_INTRO}. In Section \ref{sec:Mspectrum_hodge} we prove Theorems \ref{thm_main_prod_INTRO} and \ref{thm_main_prod_2_INTRO}, namely, we describe the Maxwell's eigenvalues and the eigenfunction of the product manifold $(M,g_p)=(\Sigma\times(0,L),g_p)$.  In Section \ref{sec:main} we prove our main Theorem \ref{main_convergence}. In Section \ref{sec:conv:eig} we establish a convergence result for eigenfunctions. Finally, we remark that in this note we identify the Maxwell's problem with the eigenvalue problem for the Hodge Laplacian restricted on co-closed $1$-forms with relative conditions.  For the reader's convenience, in Section \ref{sec:full} we describe the spectrum of the full Hodge Laplacian with relative boundary conditions in the product manifold $(M,g_p)$.

\smallskip

We conclude this section by underlining that this article is purposely written to be accessible to both mathematical analysts and differential/spectral geometers, and therefore it may contain details that are usually omitted in a research article.

\section{Geometric preliminaries and the interpretation of problem \eqref{eq:curlcurl} as an eigenvalue problem for the Hodge Laplacian}\label{sec:geom_pre}

\subsection{Notation and functional spaces}\label{sub:func:eucl}
Let us first describe the functional spaces that are involved in the analysis of the $\curl \curl$ equation in the case of a Lipschitz bounded domain $\Omega \subset \R^3$. The ambient Hilbert space will be $L^2(\Omega)^3$. Let $\nabla H^1_0(\Omega):=\{\nabla u:u\in H^1_0(\Omega)\}$ and $H(\Div0,\Omega):=\{E\in L^2(\Omega)^3:\Div E=0\}$. We recall that we have the classical Helmholtz decomposition 
\begin{equation}\label{eq:Helmholtz}
L^2(\Omega)^3 = \nabla H^1_0(\Omega) \oplus H(\Div0, \Omega).
\end{equation}
Let us also define the space
\[
H_0(\curl, \Omega) = \{ u \in L^2(\Omega)^3 : \curl u \in L^2(\Omega)^3, \: \nu \times u|_{\p \Omega} = 0\, \},
\]
and similarly
\[
H(\Div, \Omega) = \{ u \in L^2(\Omega)^3 : \Div u \in L^2(\Omega) \}.
\]
In view of Weber's compactness result (see \cite{weber}), the space
\[
X_N(\Omega) := H_0(\curl, \Omega) \cap H(\Div, \Omega)
\]
is compactly embedded in $L^2(\Omega)^3$. Let us now consider the weak formulation of equation \eqref{eq:curlcurl}, which reads
\begin{equation} \label{eq:curlcurlweak}
\int_\Omega \langle\curl E, \curl H\rangle = \la \int_\Omega \langle E , H\rangle
\end{equation}
for all $H \in H_0(\curl, \Omega)$. One sees immediately that, if $\la = 0$, any $E=\nabla u$, $u\in H^1_0(\Omega)$ is a solution of \eqref{eq:curlcurlweak}. In fact, the space $\nabla H^1_0(\Omega):=\{\nabla u:u\in H^1_0(\Omega)\}$ is contained in the kernel of the operator $\curl$. 

There are now two (equivalent) ways of studying the spectrum of this operator. Either we study it in the Hilbert space $L^2(\Omega)^3$, and then $\la = 0$ is a point of essential spectrum of the operator; or we restrict the Hilbert space to $H(\Div 0, \Omega) = \{E\in L^2(\Omega)^3:\Div E=0\}$, which corresponds to restrict the domain of the operator $\curl \curl$ to the orthogonal of $\nabla H_0^1(\Omega)$. We proceed with this second option.

Thus, in the Hilbert space $H(\Div 0, \Omega)$, which is endowed with the usual $L^2(\Omega)^3$-norm, we consider the  sesquilinear form
\[
Q(E,H) = \int_\Omega \langle\curl E, \curl H \rangle
\]
with domain $\dom(Q) = H_0(\curl, \Omega) \cap H(\Div0, \Omega)$, which is compactly embedded in $H(\Div 0, \Omega)$. By the second representation theorem, there exists a unique positive self-adjoint operator $T$ such that
\[
(T u, v) = Q(u,v),
\]
for all $u \in \dom(T) := \{ u \in \dom(Q): T^{1/2} u \in \dom(Q)\}$, $v \in \dom(Q)$. Moreover, the compact embedding of $\dom(Q)$ into the ambient Hilbert space $H(\Div 0, \Omega)$ implies that the resolvent $T^{-1}$ is compact as an operator in $H(\Div 0, \Omega)$. By standard spectral theory we deduce that the spectrum of $T$ coincides with its discrete spectrum and can be described by a sequence of non-negative, isolated eigenvalues of finite multiplicity
\[
0 \leq \la_1(\Omega) \leq \la_2(\Omega) \leq \dots \leq \la_j(\Omega) \leq \dots\nearrow+\infty,
\]
where the eigenvalues are repeated according to their multiplicity.

\subsection{Hodge Laplacian and geometric functional setting}\label{sub:hodge}
The weak formulation \eqref{eq:curlcurlweak} and the functional setting described in Subsection \ref{sub:func:eucl}, which apparently seem tied to Euclidean $3$-dimensional domains, can be translated in the context of general compact Riemannian manifolds.

Let $(M,g)$ be a compact, orientable, $n$-dimensional Riemannian manifold. Let $\Omega^p(M)$ denote the vector space of smooth differential $p$-forms on the differentiable manifold $M$.

By $d$ we denote the exterior derivative: $d:\Omega^p(M)\to\Omega^{p+1}(M)$, which is the ordinary differential of a function for $p=0$. For example, in $\mathbb R^3$ with Cartesian coordinates $(x,y,z)$, for a $0$-form $f=f(x,y,z)$ we have $df=\partial_xf dx+\partial_yfdy+\partial_yfdz$. For a $1$-form $f=f_1dx+f_2dy+f_3dz$ we have $df=(\partial_xf_2-\partial_yf_1)dx\wedge dy+(\partial_yf_3-\partial_zf_2)dy\wedge dz+(\partial_zf_1-\partial_xf_3)dx\wedge dz$, etc.

The metric $g$ allows us to define a Hodge-star  operator $\star:\Omega^p(M)\to\Omega^{n-p}(M)$. More concretely, we first note that the Riemannian metric induces a scalar product on the space of $p$-forms which we denote by $\langle\cdot,\cdot\rangle_g$. Then, for any $\omega\in\Omega^p(M)$, the Hodge-star operator $\star$ is defined by the following identity
$$
\phi\wedge(\star\omega)=\langle\phi,\omega\rangle_g dv_g\,,\ \ \ \forall\phi\in\Omega^p(M),
$$
where $dv_g$ is the volume $n$-form for the metric $g$. For example, in $\mathbb R^3$ with the Euclidean metric $g_E$ and the canonical basis $dx,dy,dz$ of $1$-forms, one has: $\star dx=dy\wedge dz$, $\star dy=dz\wedge dx$, $\star dz=dx\wedge dy$, $\star 1=dx\wedge dy\wedge dz=dv_{g_E}$, $\star(dx\wedge dy\wedge dz)=\star dv_{g_E}=1$, etc. In particular, for a $1$-form $f=f_1dx+f_2dy+f_3dz$, $\star df=(\partial_yf_3-\partial_zf_2)dx+(\partial_zf_1-\partial_xf_3)dy+(\partial_xf_2-\partial_yf_1)dz$ which can be identified with ${\rm curl}f$.

The Hodge $\star$ allows us to define a codifferential $\delta:\Omega^p(M)\to\Omega^{p-1}(M)$:
$$
\delta\omega=(-1)^{n(p+1)+1}\star d\star.
$$
For example, if $p=1$ we always have $\delta=-\star d \star$. For a $1$-form in $\mathbb R^3$, $f=f_1dx+f_2dy+f_3dz$, we have $\delta f=-\partial_xf_1-\partial_yf_2-\partial_zf_3=-{\rm div}f$ (for the Euclidean metric $g_E$). For $n=3$, $p=2$, $\delta=\star d\star$. Note that $\delta$ depends on the metric $g$.

Finally, we can define the Hodge Laplacian $\Delta:\Omega^p(M)\to\Omega^p(M)$ by
$$
\Delta\omega=(\delta d+d\delta)\omega.
$$
Note that $\Delta$ depends on the metric $g$. We will omit the dependence of $\delta$ and $\Delta$ on the metric $g$ when it is clear from the context,  otherwise we will write $\delta_g,\Delta_g$. Further note that in $\mathbb R^n$ we have $\delta f=0$, for any function (or, equivalently, $0$-form) $f$, hence $-\Delta f=\delta d f=-{\rm div}\nabla f$ is the usual Laplacian on functions. In $\mathbb R^3$, given a $1$- form $f=f_1dx+f_2dy+f_3dz$, the Hodge Laplacian acts as
$$
\Delta f={\rm curl}\,{\rm curl}f-\nabla\,{\rm div}f.
$$
Hence, the eigenvalue equation in \eqref{eq:curlcurl} corresponds to $\Delta E=\lambda E$ when $E$ is {\it co-closed}, that is, when $\delta E=0$ ($-{\rm div}E=0$). Here, with abuse of notation, we have identified the vector field $E$ with its dual $1$-form. The boundary condition $\nu\times E=0$ on $\partial\Omega$ can be translated into $i^*E=0$ on $\partial\Omega$, where $i^*:\partial\Omega\to\Omega$ is the canonical inclusion. This condition  forces $E$ to be normal to the boundary. In the terminology of differential forms, $i^*E=0$ on $\partial\Omega$ is called {\it relative} condition. In the case of $0$-forms, the relative condition corresponds to Dirichlet boundary condition.

\medskip

 We now consider the functional setting for the Hodge Laplacian acting on forms. Having a scalar product induced by the metric on $\Omega^p(M)$, the definitions of $L^2$ spaces and Sobolev spaces extend naturally to differential $p$-forms: the space $L^2\Omega^p(M)$ is defined as the completion of $\Omega^p(M)$ with respect to the $L^2$-inner product on forms: $\int_{M}\langle \omega_1,\omega_2\rangle_gdv_g$. The Sobolev spaces $H^m\Omega^p(M)$, $m\in\mathbb N$ are defined analogously, through the natural connection $\nabla$ on $(M,g)$ induced by the Riemannian metric (which allows to differentiate  forms). It is then possible to define the analogous of the spaces $H_0({\rm curl},\Omega)$, $H({\rm div},\Omega)$, $H({\rm div} 0,\Omega)$ on $(M,g)$ for differential forms of any degree. More precisely, we have
\begin{multline*}
X_N(M,g):=\\
\{\omega\in L^2\Omega^p(M):d\omega\in L^2\Omega^{p+1}(M),\delta\omega\in L^2\Omega^{p-1}(M),{\rm\ and\ }i^*\omega=0{\rm \ on\ }\partial M\},
\end{multline*}
where $i:\partial M\to M$ is the canonical inclusion (if $\partial M=\emptyset$ the last condition in the definition of $X_N$ is empty). In the case of non-empty boundary, this is the space of differential $p$-forms in $L^2$ with differential and codifferential in $L^2$ and satisfying the {\it relative} boundary conditions (namely, they are normal to $\partial M$).

We also recall the fundamental Hodge-Morrey decomposition:
\begin{equation}\label{hodge_morrey}
L^2\Omega^p(M)=d\Omega^{p-1}_R(M)\oplus\delta\Omega^{p+1}(M)\oplus\mathcal H_R(M)
\end{equation}
where $\Omega^{p-1}_R(M):=\{\omega\in\Omega^{p-1}(M):i^*\omega=0{\rm\ on\ }\partial M\}$ and $\mathcal H_R(M):=\{\omega\in\Omega^p(M):d\omega=\delta\omega=0\,,i^*\omega=0{\rm \ on\ }\partial M\}$. By abuse of notation, the spaces in the decompositions denote the closure of the corresponding spaces of smooth $p$-forms with respect to the $L^2$ norm. In the case of a domain of $\mathbb R^3$ and $p=1$,  $L^2\Omega^1(\Omega)=L^2(\Omega)^3$ and $d\Omega^0(M)=\nabla H_0^1$ (up to the isomorphism identifying $1$-forms with vector fields), and therefore we recover the Helmholtz decomposition \eqref{eq:Helmholtz}. The analogous Hodge-Morrey decomposition holds for any Sobolev space $H^m\Omega^p(M)$, $m\geq 1$. 

We can now see that problem \eqref{eq:curlcurlweak} corresponds to
\begin{equation}\label{eq:weak:forms}
\int_M\langle d\omega,d\varphi\rangle_gdv_g=\lambda\int_M\langle \omega,\varphi\rangle_gdv_g
\end{equation}
for all $\varphi\in X_N(M,g)$ such that $\delta\varphi=0$ in $M$, in the unknown $\omega\in X_N(M,g)$, $\delta \omega=0$ and $\lambda\in\mathbb R$.

Finally, we recall Gaffney's inequality
\begin{equation}\label{gaffney}
\|\omega\|^2_{H^1\Omega^p(M)}\leq C_G\left(\|d\omega\|^2_{L^2\Omega^{p+1}(M)}+\|\delta \omega\|^2_{L^2\Omega^{p-1}(M)}+\|\omega\|^2_{L^2\Omega^p(M)}\right)
\end{equation}
which holds for $\omega\in X_N(M,g)$. It is clear now that all the discussion in Subsection \ref{sub:func:eucl} applies to this more general setting, since the embedding $H^1\Omega^p(M)\to L^2\Omega^p(M)$ is  compact. This means that problem \eqref{eq:weak:forms} is associated with a compact, self-adjoint operator in $L^2\Omega^p(M)$ with nonnegative, discrete spectrum. Gaffney's inequality was originally proved in \cite{gaf1,gaf2} for manifolds without boundary, and in \cite{fri_gaf} for manifolds with boundary. For manifolds with boundary, it holds also under very mild smoothness assumptions on $\partial M$, see \cite{mitrea}. More precisely, for Euclidean domains, a Lipschitz condition on the boundary and a uniform outer ball condition are sufficient to guarantee the validity of Gaffney's inequality.

For more details on Sobolev spaces of $p$-forms, Hodge-Morrey decomposition, Gaffney's inequality, and for other relevant results of functional analysis, we refer to \cite{schwarz_hodge}. For more information on the spectrum of the Hodge Laplacian on $p$-forms (with different types of boundary conditions) we refer, e.g., to \cite{guerini_savo,guerini_savo_0,raulot_savo,savo_DTN}.

\medskip

The geometric framework described above allows for a more general approach to eigenvalue problems of type \eqref{eq:curlcurl} and helps to avoid some technicalities that may arise from the explicit use of coordinates. It gives a  geometric meaning to the decomposition of the Maxwell's spectrum in the ${\rm TE}, {\rm TM}$ and ${\rm TEM}$ modes for domains $\omega\times I$ (where $\omega\subset\mathbb R^2$ and $I\subset\mathbb R$) contained in \cite{coda}. Moreover, even though it is not the purpose of the present paper, it can be applied in any dimension and any ambient Riemannian manifold. 

We refer to \cite{guerini_savo}  for a nice introduction to eigenvalue problems for the Laplacian on $p$-forms on manifolds with boundary. See also \cite{savo_convex} for a detailed exposition on the spectrum of the Laplacian on $p$-forms on convex Euclidean domains.

\medskip

\section{Hodge Laplacian spectrum on co-closed $1$-forms with relative conditions on $3$-dimensional product manifolds}\label{sec:Mspectrum_hodge}

Throughout this section, we shall denote by $M$ the following product manifold of dimension $3$: 
$$
M=\Sigma\times I,
$$
where $(\Sigma,g_{\Sigma})$ is a compact Riemannian surface, $I=(0,h)$ is an interval, and $h>0$. The surface $\Sigma$ is compact, connected, smooth, and it is allowed to have a smooth boundary (with any number of connected components) and a possibly non-trivial topology. By $g_{\Sigma}$ we denote a smooth Riemannian metric on $\Sigma$. By $x$ we shall denote a point of $\Sigma$ and by $t\in(0,h)$ the usual coordinate on $I$.

We consider the manifold $M$ endowed with the product metric $g_p=g_{\Sigma}\times dt^2$. With this choice, $(M,g_p)$ is a  product Riemannian manifold. Note that at any point $q=(x,t)\in M$, we have the canonical orthogonal decomposition $T_qM=T_x\Sigma\oplus T_tI$.

We consider the following eigenvalue problem
\begin{equation}\label{hodge_coclosed}
\begin{cases}
\Delta \omega=\lambda\omega\,, & {\rm in\ }M\\
i^*\omega=0\,, & {\rm on\ }\partial M
\end{cases}
\end{equation}
restricted to the space of {\it co-closed $1$-forms}, that is, $1$-forms $\omega$ verifying $\delta \omega=0$ . To be more precise, we should have written $\delta_{g_p}$ and $\Delta_{g_p}$ since the co-differential depends on the metric. We shall omit the subscript when its meaning is clear from the context. Here $i:\partial M\to M$ is the canonical inclusion. This problem is the eigenvalue problem for the Hodge Laplacian restricted to co-closed one-forms with {\it relative} boundary conditions, see \cite{guerini_savo}.

We now describe the eigenvalues and eigenfunctions of \eqref{hodge_coclosed} in the product manifold $M$. We start from the case $\partial \Sigma=\emptyset$.

\subsection{Eigenvalues and eigenfunctions when $\partial\Sigma=\emptyset$}

\begin{thm}\label{thm_main_prod}
Let $(M,g_p)$ be a product Riemannian manifold, with $M=\Sigma\times(0,h)$, $h>0$, $(\Sigma,g_{\Sigma})$ a compact Riemannian surface (without boundary) and $g_p$ the product metric. Let 
\[
\begin{split}
(\mu_k, w_k)_{k \geq 1} \: &\textup{be the eigencouples of the Laplacian on $(\Sigma, g_\Sigma)$}; \\
(\eta_j(h), v_j)_{j \geq 1} \: &\textup{be the eigencouples of the Neumann Laplacian on $(0,h)$};\\
(d_j(h), u_j)_{j \geq 1} \: &\textup{be the eigencouples of the Dirichlet Laplacian on $(0,h)$}.
\end{split}
\]
Then the spectrum of \eqref{hodge_coclosed} is given by the union the following four families:
\begin{enumerate}[i)]
\item $\mu_k+\eta_j(h)$, $k\geq 2$, $j\geq 1$;
\item $\mu_k+d_j(h)$, $k\geq 2$, $j\geq 1$;
\item $d_j(h)$, $j\geq 1$, each repeated $2\gamma$ times;
\item $0$ with multiplicity $1$.
\end{enumerate}
Here 
$\gamma$ is the genus of the surface. The corresponding eigenfunctions are given by
\begin{enumerate}[i)]
\item $F_{jk}(x,t)=\delta d(w_k(x)v_j(t)dt)$, $k\geq 2$, $j\geq 1$. 
\item $F_{jk}(x,t)=\star d(w_k(x)u_j(t)dt)$, $k\geq 2$, $j\geq 1$. 
\item $F_{jk}(x,t)=H_k(x)u_j(t)$, where $\{H_k\}_{k=1}^{2\gamma}$ is a basis of harmonic $1$-forms on $\Sigma$. 
\item $F(x,t)=dt$.
\end{enumerate}
\end{thm}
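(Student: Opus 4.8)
The plan is to exploit the product structure of $(M,g_p)$ to separate variables, reducing the three-dimensional Hodge Laplacian eigenvalue problem on co-closed $1$-forms to one-dimensional problems on the interval $I=(0,h)$ coupled with the spectral data of the Laplacian on $\Sigma$. I would first set up a complete orthogonal basis adapted to the product. On $\Sigma$, by the Hodge decomposition for $1$-forms on a closed surface, every $1$-form is a sum of an exact form $d_\Sigma f$, a co-exact form $\delta_\Sigma \beta$ (equivalently $\star_\Sigma d_\Sigma g$), and a harmonic form; the space of harmonic $1$-forms has dimension $2\gamma$ by de Rham/Hodge theory. A general $1$-form on $M$ may be written as $\alpha(x,t) + f(x,t)\,dt$, where, for each fixed $t$, $\alpha(\cdot,t)$ is a $1$-form on $\Sigma$ and $f(\cdot,t)$ is a function on $\Sigma$; the key computation is to express $d$, $\delta_{g_p}$, and hence $\Delta_{g_p}$ in terms of $d_\Sigma$, $\delta_\Sigma$, and $\partial_t$, using that for the product metric the operators on the two factors commute and the Hodge star factorizes.

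The key steps, in order, are as follows. First I would record the product formulas: writing $\omega = \alpha + f\,dt$, one has $d\omega = d_\Sigma\alpha + (\partial_t\alpha - d_\Sigma f)\wedge dt$ and $\delta_{g_p}\omega = \delta_\Sigma\alpha - \partial_t f$, from which $\Delta_{g_p} = \Delta_\Sigma - \partial_t^2$ acts componentwise (this diagonal action on the tangential and $dt$-components is the crucial simplification afforded by the product metric). Second, I would expand $\alpha(\cdot,t)$ and $f(\cdot,t)$ in the spectral bases on $\Sigma$: the functions $w_k$ (eigenfunctions of $\Delta_\Sigma$) for the $dt$-component, and for the tangential component the exact forms $d_\Sigma w_k$, the co-exact forms $\star_\Sigma d_\Sigma w_k$, and the harmonic forms $H_k$. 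Third, I would impose the relative boundary condition $i^*\omega = 0$ at $t=0,h$, which annihilates the purely tangential component $\alpha$ on the boundary slices while leaving $f\,dt$ (the normal part) unconstrained in the same way; this is precisely what selects Dirichlet data for the $t$-dependence of the tangential pieces and Neumann data for the normal piece. Fourth, I would impose co-closedness $\delta_{g_p}\omega=0$, which couples the $\partial_t f$ term to $\delta_\Sigma\alpha$ and thereby links the coefficient of the exact tangential mode to the $dt$-mode; solving this constraint produces the combinations appearing in families i) and ii), while the co-exact and harmonic tangential modes are automatically co-closed and yield families ii) (via $\star d(w_k u_j\,dt)$) and iii) (via $H_k u_j$). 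Finally I would verify that the listed eigenfunctions $F_{jk}$ are genuine eigenforms with the stated eigenvalues by direct application of the product Laplacian formula, check they satisfy $\delta\omega=0$ and $i^*\omega=0$, and confirm completeness by a dimension/orthogonality count against the full separated basis, isolating the single harmonic-on-both-factors form $dt$ as the lone zero eigenvalue in iv).

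The main obstacle I anticipate is the bookkeeping around co-closedness and the boundary conditions acting together: one must carefully track which separated modes survive the constraint $\delta\omega=0$ and which Sturm–Liouville problem on $(0,h)$ (Dirichlet versus Neumann) each surviving component inherits from $i^*\omega=0$. In particular, the exact tangential modes $d_\Sigma w_k$ do not by themselves give co-closed forms, so the eigenforms of family i) must be built as $\delta d(w_k v_j\,dt)$, i.e.\ as the co-differential of a $2$-form, and one has to check that this construction is nonzero, co-closed (automatic since $\delta^2=0$), and an eigenform with eigenvalue $\mu_k+\eta_j(h)$; the appearance of the Neumann eigenvalues $\eta_j(h)$ here rather than Dirichlet is the subtle point, forced by the relative condition interacting with the $\delta$ applied to the $dt\wedge$-type $2$-form. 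The restriction $k\geq 2$ in families i) and ii) reflects that $w_1$ is constant ($\mu_1=0$), so $d_\Sigma w_1=0$ and the corresponding exact and co-exact constructions degenerate; these degenerate cases are exactly what feed into families iii) and iv), and reconciling the counting there (the factor $2\gamma$ for harmonic forms, the single $dt$) is where I would be most careful to ensure no eigenfunction is double-counted or omitted. The remaining verifications—orthogonality, smoothness, and that the Sturm–Liouville eigenfunctions $v_j,u_j$ on $(0,h)$ are the standard cosines and sines—are routine once the separation is established.
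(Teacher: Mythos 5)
Your proposal is sound and shares its computational core with the paper's proof: the product formulas for $d$, $\delta_{g_p}$, $\Delta_{g_p}$, separation of variables, and the reduction to Sturm--Liouville problems on $(0,h)$. The difference is organizational, and worth recording. The paper constructs the four families by ansatz --- $\delta d(w_k v_j\,dt)$, $\star d(w_k u_j\,dt)$, $u_j H_k$, $dt$ --- each co-closed by construction, and then proves completeness by a separate argument: an arbitrary co-closed form with relative conditions, assumed orthogonal to all four families, is tested against each family, and Stokes' theorem on the slices $\Sigma\times\{t\}$ forces it to vanish. You instead expand a general co-closed form in the separated basis (exact, co-exact and harmonic parts on $\Sigma$, tensored with $t$-modes) from the outset, so completeness comes essentially for free from completeness of that basis; the price is that you must solve by hand the coupled ODE system which the constraint $\delta\omega=0$ creates between the exact-tangential and normal coefficients --- the paper's ansatz $\delta d(f\,dt)$ is precisely a closed-form solution of that coupling. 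One point of your mechanism needs correcting, though it does not break the plan: the relative condition $i^*\omega=0$ imposes nothing at all on the normal piece $f\,dt$, since $i^*dt=0$ on $\Sigma\times\{0,h\}$ already; the Neumann condition responsible for $\eta_j(h)$ in family i) arises by combining $\delta\omega=0$ with the Dirichlet condition on the tangential part (if $\alpha$ vanishes identically on a boundary slice, so does $\delta_\Sigma\alpha$, hence $\partial_t f=0$ there). Finally, note that families i) and ii) overlap as sets of numbers, because $\eta_{j+1}(h)=d_j(h)$ on an interval, while their eigenforms are different; so what you flagged as potential double counting is genuine multiplicity, not redundancy.
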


\begin{rem}
We can recognise the three families of modes described in \cite{coda} for cylinders and balls in $\mathbb R^3$. In particular, our first family of corresponds to the ${\rm TM}$ modes in \cite{coda}, the second family corresponds to the ${\rm TE}$ modes, while, if the topology is not trivial, our third family  corresponds to the ${\rm TEM}$ modes. 
\end{rem}

\begin{proof}
{\bf First family.}  We look for eigenfunctions $F$ of the form
$$
F=\delta d(fdt),
$$
where $f$ is a smooth function on $M$.
We have the following facts:
 \begin{itemize}
 \item[\textbullet] $F=d_{\Sigma}f_t+(\Delta_{\Sigma}f) dt$, where $d_{\Sigma}$ is the differential on $\Sigma$, and $f_t$ indicates the derivative of $f$ with respect to $t$;
 \item we have $\delta F=\delta^2d(fdt)=0$, hence $F$ is co-closed;
 \item[\textbullet] we have then
 $$
 \Delta F=\delta d F=d_{\Sigma}(\Delta_{\Sigma}f_t-f_{ttt})+(\Delta^2_{\Sigma}f-\Delta_{\Sigma}f_{tt})dt;
$$
\item[\textbullet] the boundary condition $i^*F=0$ reads $d_{\Sigma}f_t=0$.
 \end{itemize}
Here and in what follows, by $\Delta_{\Sigma}$ we denote the Laplacian on $\Sigma$ for the metric $g_{\Sigma}$ and by $\delta_{\Sigma}$ we denote the codifferential on $\Sigma$ for the metric $g_{\Sigma}$. Therefore, we need to solve
\begin{equation}\label{split0}
\begin{cases}
d_{\Sigma}(\Delta_{\Sigma}f_t-f_{ttt})+(\Delta^2_{\Sigma}f-\Delta_{\Sigma}f_{tt})dt=\lambda(d_{\Sigma}f_t+(\Delta_{\Sigma}f) dt)\,, & {\rm in\ }M\\
d_{\Sigma}f_t=0\,, & {\rm on\ }\Sigma\times\{0,h\}.
\end{cases}
\end{equation}
According to the separation-of-variables ansatz, we look for solutions of the form $f(x,t)=w(x)v(t)$. The equation preserves the separation of variables and therefore we obtain:
\begin{equation}\label{split1}
\begin{cases}
v\Delta^2_{\Sigma}w-v''\Delta_{\Sigma}w-\lambda v\Delta_{\Sigma}w=0\,, & {\rm in\ }M\\
v'(0)d_{\Sigma}w=v'(h)d_{\Sigma}w=0\,, & {\rm on\ }\Sigma\times\{0,h\}.
\end{cases}
\end{equation}
If $w={\rm const}$ on $\Sigma$ we would have $F=0$. Moreover, we can choose $w$ such that $\int_{\Sigma}w=0$, since the corresponding $F$ would not change. Hence, the boundary condition reads $v'(0)=v'(h)=0$.

\medskip

Note that if $f(x,t)=w(x)v(t)$ with $\int_{\Sigma}w=0$ solves \eqref{split1}, then it solves
\begin{equation}\label{split2}
d_{\Sigma}(\Delta_{\Sigma}f_t-f_{ttt})=\lambda d_{\Sigma}f_t\,,\ \ \  {\rm in\ }M
\end{equation}
thus it solves \eqref{split0}.

A standard ansatz to construct a solution is to require that there exist constants $c$ such that
$$
\begin{cases}
\Delta^2_{\Sigma}w+c\Delta_{\Sigma}w-\lambda\Delta_{\Sigma}w=0\,,  & {\rm in\ }\Sigma\,,\\
-v''(t)=cv(t)\,, & {\rm in\ }(0,h)\,,\\
v'(0)=v'(h)=0.
\end{cases}
$$
The first equation implies that $w$ is an eigenfunction of the Laplacian on $\Sigma$ with eigenvalue $\mu=\lambda-c>0$ , since we are in the subspace of $H^1(\Sigma)$ of functions $w$ with zero mean over $\Sigma$. 
Note that $\mu_1=0$, $\mu_2>0$. On the other hand $v$ must be a Neumann eigenfunction on $(0,h)$ with eigenvalue $c$. 
We conclude that
$$
\lambda=\mu_k+\eta_j(h)\,,\ \ \ k\geq 2,j\geq 1.
$$

\medskip

{\bf Second family.} Consider now the following functions:
$$
F=\star d(fdt)
$$
where $f$ is a smooth function in $M$. One checks that
$$
F=\star d_{\Sigma}f.
$$
It is immediate to check that $\delta F=-\delta^2\star(fdt)=0$, so $F$ is co-closed. Hence
$$
\Delta F=\delta dF=\star d_{\Sigma}(\Delta_{\Sigma}f-f_{tt}).
$$
Hence \eqref{hodge_coclosed} becomes
\begin{equation}
\begin{cases}
\star d_{\Sigma}(\Delta_{\Sigma}f-f_{tt})=\lambda\star d_{\Sigma}f\,, & {\rm in\ } M\,,\\
\star d_{\Sigma}f=0\,, & {\rm on\ } \Sigma\times\{0,h\}.
\end{cases}
\end{equation}
which is equivalent to
\begin{equation}\label{fam20}
\begin{cases}
 d_{\Sigma}(\Delta_{\Sigma}f-f_{tt})=\lambda d_{\Sigma}f\,, & {\rm in\ } M\,,\\
 d_{\Sigma}f=0\,, & {\rm on\ } \Sigma\times\{0,h\}.
\end{cases}
\end{equation}
Again, the separation-of-variables ansatz suggests to look for solutions of the form $f(x,t)=w(x)u(t)$. We obtain
$$
\begin{cases}
-u''d_{\Sigma}w+ u\,d_{\Sigma}\Delta_{\Sigma}w=\lambda ud_{\Sigma}w\,,  & {\rm in\ }M\\
u(0)d_{\Sigma}w=u(h)d_{\Sigma}w=0.
\end{cases}
$$
Note that we can take $w$ such that $\int_{\Sigma}w=0$. Indeed, adding to $f$ any function $\phi(t)$ which depends only on $t$, we would have $\star d(f dt+\phi dt)=\star d(fdt)+\star d(\phi dt)$ and $d(\phi(t)dt)=0$.

The same argument used for the first family shows that $w$ cannot be constant, otherwise $f=u(t)$ and hence $F=0$. This implies that necessarily $-u''(t)=c u(t)$ for some constant $c$ and $u(0)=u(h)=0$. This implies that $c=d_j(h)$ a Dirichlet eigenvalue on $(0,h)$, and that $w$ must solve
$$
d_{\Sigma}(\Delta_{\Sigma}w+d_j(h)-\lambda)=0,
$$
hence
$$
\Delta_{\Sigma}w-(\lambda-d_j(h))w=c
$$
for some constant $w$. But since $w$ has zero mean, integrating the previous in $\Sigma$ we get $c=0$. Therefore $w$ is an eigenfunction of $\Delta_{\Sigma}$ with eigenvalue $\lambda-d_j(h)>0$. We conclude that 
$$
\lambda=\mu_k+d_j(h)\,,\ \ \ k\geq 2,j\geq 1.
$$

\medskip

{\bf Third family.} This family arises when the topolgy of $\Sigma$ is not trivial. We set
$$
F=u(t)H(x)
$$
where $H$ is a harmonic $1$-form on $\Sigma$. The space of harmonic $1$-forms on a compact surface is finite dimensional and has dimension $2\gamma$, where $\gamma$ is the genus of the surface. Proceeding as above, we find that $u$ must be some Dirichlet eigenfunction on $(0,h)$ with eigenvalue $d_j(h)$, $j\geq 1$.  The resulting eigenvalues are
$$
\lambda=d_j(h)
$$
each one repeated $2\gamma$ times.

\medskip

{\bf Fourth family (eigenfunctions with zero eigenvalue)}. One eigenfunction is left out from this analysis, which is $dt$. We have that $dt$ is harmonic and satisfies the relative boundary conditions.  We remark that this is the only zero eigenvalue of the Hodge Laplacian on $1$-forms (not just restricted to co-closed forms) on $\Sigma\times(0,L)$. This is not surprising since the relative cohomology in degree $1$ for $\Sigma\times(0,L)$ has dimension $1$.

\medskip

{\bf Completeness of the eigenfunctions.}
To complete the proof of Theorem \ref{thm_main_prod}, it remains to establish that the 4 families of eigenfunctions found above span the whole of the Hilbert space $L^2$. Assume that there exists a co-closed $1$-form $\omega$, satisfying the relative boundary conditions, and orthogonal to all the eigenfunctions in the three families and to $dt$. We will show that $\omega$ must be zero. 

Note that $\omega$ is a $1$-form satisfying $\delta\omega=0$ in $M$ and $i^*\omega=0$ on $\partial M$. We start by testing with the eigenfunctions of the first family, which are of the form
$$
F=d_{\Sigma}f_t+(\Delta_{\Sigma}f)dt
$$
where $f=w_kv_j$, $k\geq 2$, $j\geq 1$, with $w_k$ and $v_j$ as in the statement of the Theorem. 
At any $(x,t)\in M$ we can write $\omega=\omega_{\Sigma}+\omega_t dt$, where $\langle\omega_{\Sigma},dt\rangle_g=0$ in $M$ and $\omega_t$ is a smooth function defined in $M$. The coefficients of $\omega_{\Sigma}$  are smooth functions on $M$. Then, for all $k\geq 2$, $j\geq 1$, we have

\begin{multline}
0
=\int_0^h\int_{\Sigma}\langle d_{\Sigma}f_t+(\Delta_{\Sigma}f)dt,\omega_{\Sigma}+\omega_t dt\rangle_g dv_{g_{\Sigma}}dt\\
=\int_0^h\int_{\Sigma}\langle d_{\Sigma}f_t,\omega_{\Sigma}\rangle_{g_{\Sigma}}+\Delta_{\Sigma}f\omega_t dv_{g_{\Sigma}}dt\\
=-\int_0^h\int_{\Sigma}f_t\delta_{\Sigma}\omega_{\Sigma}dv_{g_{\Sigma}}dt+\mu_k\int_0^h\int_{\Sigma}w_kv_j\omega_t dv_{g_{\Sigma}}dt\\=\int_0^h\int_{\Sigma}f_t(\omega_t)_tdv_{g_{\Sigma}}dt+\mu_k\int_0^h\int_{\Sigma}w_kv_j\omega_t dv_{g_{\Sigma}}dt.
\end{multline}
Here we have used that $0=\delta\omega=\delta_{\Sigma}\omega_{\Sigma}+(\omega_t)_t$, where  $(\omega_t)_t:=\partial_t\omega_t$ since the metric is in product form. Then 
\begin{multline}
0=\int_0^h\int_{\Sigma}f_t(\omega_t)_tdv_{g_{\Sigma}}dt+\mu_k\int_0^h\int_{\Sigma}w_kv_j\omega_t dv_{g_{\Sigma}}dt\\
=(n_j(h)+\mu_k)\int_0^h\int_{\Sigma}w_kv_j\omega_t dv_{g_{\Sigma}}dt.
\end{multline}
This implies that $\omega_t$ must be constant on $\Sigma$ for any $t$ (that is, $\omega_t$ is a  function of $t$ only). In particular, again from $\delta_{\Sigma}\omega_{\Sigma}=-(\omega_t)_t$ we get, from Stokes theorem
$$
0=\int_{\Sigma}\delta_{\Sigma}\omega_{\Sigma}=(\omega_t)_t|\Sigma|
$$
which means that $\omega_t$ is constant in $M$. The orthogonality with $dt$ implies that $\omega_t= 0$.

Now, let us consider the second family of eigenfunctions: 
$$
F=\star d_{\Sigma}f
$$
where $f=w_ku_j$, $k\geq 2$, $j\geq 1$, with $w_k$ and $u_j$ as in the statement of the Theorem.
We therefore have, for any $k\geq 2$, $j\geq 1$:
$$
0=\int_0^h\int_{\Sigma}\langle\star d_{\Sigma}f,\omega\rangle_gdv_{g_{\Sigma}}dt=\int_0^hu_j\left(\int_{\Sigma}\langle \star d_{\Sigma}w_k,\omega_{\Sigma}\rangle_{g_{\Sigma}}dv_{g_{\Sigma}}\right)dt.
$$
This means that a.e.
$$
\int_{\Sigma}\langle \star d_{\Sigma}w_k,\omega_{\Sigma}\rangle_{g_{\Sigma}}dv_{g_{\Sigma}}=0
$$
which implies
$$
\int_{\Sigma}w_k\star d_{\Sigma}\omega_{\Sigma}=0
$$
for all $k$. This means that $\star d_{\Sigma}\omega_{\Sigma}$ is constant on $\Sigma\times\{t\}$ for all $t$, namely, $\star d_{\Sigma}\omega_{\Sigma}=z(t)$. By Stokes theorem
$$
0=\int_{\Sigma}d_{\Sigma}\omega_{\Sigma}=z(t)|\Sigma|
$$
hence $z(t)=0$ and for any $t$, $\omega_{\Sigma}$ is closed on $\Sigma$.

\medskip

Now we consider the third family of eigenfunctions $H_k(x)u_j(t)$, $j\geq 1$, where $u_j$ are the Dirichlet eigenfunctions on $(0,h)$ and $\{H_k\}_{k=1}^{2g}$, is a basis of harmonic $1$-forms in $\Sigma$. Proceeding as above, we find that $\omega_{\Sigma}$ is co-exact for any $t$, which means that $\omega_{\Sigma}=-\star d_{\Sigma}\star \phi\, dv_{g_{\Sigma}}$ for some function $\phi$. Since we have proved that $\omega$ is closed, we conclude that $\Delta_{\Sigma}\phi=0$ for all $t$, hence $\phi$ is constant and $\omega_{\Sigma}=0$. Hence $\omega=c dt$ for some constant $c$. Since $\omega$ must be orthogonal to $dt$ (the last eigenfunction, associated with the eigenvalue $0$), necessarily $c=0$.
\end{proof}
We observe that, all the eigenvalues diverge to $+\infty$ as $h \to 0^+$, except for the family $\mu_k+\eta_1(h)=\mu_k$, $k\geq 2$, and the zero eigenvalue (which is $\mu_1$). 
We restate this result in the following corollary.

\begin{cor}\label{cor:prod:1}
Let $\lambda_j(h,g_p)$ denote the eigenvalues of the product manifold $(M,g_p)$ where $M=\Sigma\times(0,h)$. Then
$$
\lim_{L\to 0}\lambda_j(h,g_p)=\mu_j,
$$
where $\mu_j$ are the Laplacian eigenvalues of $(\Sigma,g_{\Sigma})$.
\end{cor}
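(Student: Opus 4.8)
The plan is to read off the conclusion directly from the explicit spectral description in Theorem \ref{thm_main_prod}, by separating the four families into those whose eigenvalues remain bounded as $h\to 0^+$ and those that diverge. First I would recall the one-dimensional eigenvalues explicitly: the Neumann eigenvalues on $(0,h)$ are $\eta_j(h)=\pi^2(j-1)^2/h^2$, so that $\eta_1(h)=0$ for every $h$, while $\eta_j(h)\to+\infty$ for $j\geq 2$; the Dirichlet eigenvalues are $d_j(h)=\pi^2 j^2/h^2$, so that $d_j(h)\to+\infty$ for every $j\geq 1$, with the uniform lower bound $d_j(h)\geq d_1(h)=\pi^2/h^2$.

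The key observation is that the eigenvalues which stay bounded as $h\to 0^+$ are, exactly and without any approximation, the Laplacian spectrum of $\Sigma$. Indeed, taking $j=1$ in family i) gives $\mu_k+\eta_1(h)=\mu_k$ for all $k\geq 2$ and all $h$, while family iv) contributes the single eigenvalue $0=\mu_1$ (recall that $\Sigma$ is connected, so $\mu_1=0$ is simple). Collecting these, the bounded part of the spectrum of \eqref{hodge_coclosed}, counted with multiplicity, equals $\{\mu_k\}_{k\geq 1}=\operatorname{spec}(\Delta_\Sigma)$ for \emph{every} $h>0$. All remaining eigenvalues diverge: families i) with $j\geq 2$, ii), and iii) are each bounded below by $\min\{\eta_2(h),d_1(h)\}=\pi^2/h^2\to+\infty$, uniformly in the surface index $k$ since $\mu_k\geq 0$.

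It then remains only to match the ordering. Fix $j\in\mathbb N$ and set $\Lambda:=\mu_j+1$. For every $h$ with $\pi^2/h^2>\Lambda$, all the divergent eigenvalues exceed $\Lambda>\mu_j$, so none of them can lie among the $j$ smallest eigenvalues; consequently the $j$ smallest eigenvalues of \eqref{hodge_coclosed} are drawn entirely from the bounded family $\{\mu_k\}_{k\geq 1}$ and, listed in nondecreasing order, coincide with $\mu_1\leq\dots\leq\mu_j$. Hence $\lambda_i(h,g_p)=\mu_i$ for all $i\leq j$ as soon as $h$ is small enough, which yields $\lim_{h\to 0^+}\lambda_j(h,g_p)=\mu_j$; in fact the limit is attained for all sufficiently small $h$.

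I do not expect a genuine obstacle here, since Theorem \ref{thm_main_prod} already furnishes the complete list of eigenvalues with multiplicities; the only point requiring care is the bookkeeping of multiplicities and the min--max ordering when passing from the unordered union of the four families to the nondecreasing enumeration $\{\lambda_j(h,g_p)\}_{j\geq 1}$. This is harmless precisely because the bounded family reproduces $\operatorname{spec}(\Delta_\Sigma)$ exactly rather than merely asymptotically, so the counting-function argument above involves no error terms.
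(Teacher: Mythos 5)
Your proposal is correct and follows essentially the same route as the paper: the paper deduces the corollary directly from Theorem \ref{thm_main_prod} by observing that all eigenvalues diverge as $h\to 0^+$ except the family $\mu_k+\eta_1(h)=\mu_k$, $k\geq 2$, together with the zero eigenvalue (which is $\mu_1$). Your explicit counting argument merely fills in the ordering/multiplicity bookkeeping that the paper leaves implicit, and it even sharpens the statement by noting that $\lambda_i(h,g_p)=\mu_i$ exactly for all sufficiently small $h$.
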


\subsection{Eigenvalues and eigenfunctions when $\partial\Sigma\ne\emptyset$}

We consider now the case when $\Sigma$ has a boundary. The arguments remain essentially the same up to some minor changes that we now underline.

The first difference comes from the ansatz for the families of eigenfunctions. By separation of variables, the eigenfunctions must be in the form $f(x,t)=w(x)u(t)$, where the component $w(x)$ (which was an eigenfunction of the Laplacian on $\Sigma$ in Theorem \ref{thm_main_prod}) must now satisfy some boundary conditions on $\partial\Sigma$. The second difference that one can note with respect to the proof of the completeness of eigenfunctions in Theorem \ref{thm_main_prod} is that, when using the Stokes theorem on $\Sigma$, we have boundary terms; however, orthogonality with respect to harmonic 1-forms associated with boundary components of $\partial\Sigma$ allows to perform the same proof above, with almost no change. In fact, in the case where $ \partial \Sigma \neq \emptyset$, there are in general additional harmonic $1$-forms related to the connected components of $\partial\Sigma$.

\begin{thm}\label{thm_main_prod_2}
Let $(M,g_p)$ be a product Riemannian manifold, with $M=\Sigma\times(0,h)$, $h>0$, $(\Sigma,g_{\Sigma})$ a compact Riemannian surface with non-empty boundary, and $g_p$ the product metric. 
Let 
\[
\begin{split}
(\mu^D_k, w^D_k)_{k \geq 1} \: &\textup{be the eigencouples of the Dirichlet Laplacian on $(\Sigma, g_\Sigma)$}; \\
(\mu^N_k, w^N_k)_{k \geq 1} \: &\textup{be the eigencouples of the Neumann Laplacian on $(\Sigma, g_\Sigma)$}; \\
(\eta_j(h), v_j)_{j \geq 1} \: &\textup{be the eigencouples of the Neumann Laplacian on $(0,h)$};\\
(d_j(h), u_j)_{j \geq 1} \: &\textup{be the eigencouples of the Dirichlet Laplacian on $(0,h)$}.
\end{split}
\]
Then the spectrum of \eqref{hodge_coclosed} is given by the union the following three sequences:
\begin{enumerate}[i)]
\item $\mu_k^D+\eta_j(h)$, $k,j\geq 1$;
\item $\mu_k^N+d_j(h)$, $k \geq 2$, $j\geq 1$;
\item $d_j(h)$, $j\geq 1$, each repeated $2\gamma+b$ times.
\end{enumerate}
Here 
$\gamma$ is the genus of the surface and $b+1$ is the number of connected components of $\partial\Sigma$. The corresponding eigenfunctions are given by
\begin{enumerate}[i)]
\item $F_{jk}(x,t)=\delta d(w^D_k(x)v_j(t)dt)$, $k\geq 1$, $j\geq 1$.
\item $F_{jk}(x,t)=\star d(w^N_k(x)u_j(t)dt)$, $k\geq 2$, $j\geq 1$.
\item $F_{jk}(x,t)=H_k(x)u_j(t)$, where $\{H_k\}_{k=1}^{2\gamma+b}$ is a basis of harmonic $1$-forms on $\Sigma$ with relative conditions.
\end{enumerate}
\end{thm}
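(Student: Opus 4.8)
The plan is to repeat, almost verbatim, the strategy of the proof of Theorem \ref{thm_main_prod}, keeping careful track of the two genuinely new features: the test $1$-forms must now satisfy the relative condition on the \emph{lateral} boundary $\partial\Sigma\times(0,h)$ as well, and every integration by parts on $\Sigma$ produces a boundary term over $\partial\Sigma$. First I would verify that the three listed families are co-closed eigenforms satisfying $i^*F=0$ on the \emph{entire} boundary $\partial M=(\Sigma\times\{0,h\})\cup(\partial\Sigma\times(0,h))$. For the first family $F=\delta d(w^D_k v_j\,dt)=d_\Sigma(v_j'w^D_k)+(\Delta_\Sigma(w^D_kv_j))\,dt$, the Neumann condition $v_j'(0)=v_j'(h)=0$ annihilates the $\Sigma$-part on the caps, while the Dirichlet condition $w^D_k|_{\partial\Sigma}=0$ forces both the $dt$-component $\Delta_\Sigma f=\mu_k^D v_j w^D_k$ and the tangential-to-$\partial\Sigma$ part of $d_\Sigma f_t$ to vanish on the lateral boundary; the eigenvalue computation is identical to the closed case and gives $\mu_k^D+\eta_j(h)$, with $k\geq1$ since $\mu_1^D>0$. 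For the second family $F=\star d_\Sigma(w^N_ku_j)$, which is tangent to $\Sigma$, the Dirichlet condition $u_j(0)=u_j(h)=0$ handles the caps, while on the lateral boundary the pullback $i^*F$ reduces to a multiple of the \emph{normal} derivative $\partial_\nu w^N_k\,ds$, so the Neumann condition $\partial_\nu w^N_k|_{\partial\Sigma}=0$ is precisely what yields $i^*F=0$; the eigenvalue is $\mu_k^N+d_j(h)$, with $k\geq2$ since a constant $w$ produces $F=0$.

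For the third family I would take $F=H_k(x)u_j(t)$, with $H_k$ ranging over a basis of the space $\mathcal H_R(\Sigma)$ of harmonic $1$-forms on $\Sigma$ satisfying the relative condition $i^*H_k=0$ on $\partial\Sigma$. Since $H_k$ is closed, co-closed and $t$-independent, one checks $\delta_M F=0$ and $\Delta_M F=-u_j''H_k=d_j(h)F$; the relative condition on $H_k$ gives $i^*F=0$ on the lateral boundary, and the Dirichlet condition on $u_j$ handles the caps. The multiplicity is $\dim\mathcal H_R(\Sigma)=\dim H^1(\Sigma,\partial\Sigma)=2\gamma+b$, by Lefschetz duality and the fact that a genus-$\gamma$ surface with $b+1$ boundary circles has first Betti number $2\gamma+b$. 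I would also point out that, unlike in Theorem \ref{thm_main_prod}, \emph{no} zero eigenvalue survives: the candidate harmonic field $dt$ now fails the relative condition on the lateral boundary, where $\partial_t$ is tangent so $i^*(dt)=dt\neq0$, consistently with $H^1(M,\partial M)\cong H_2(\Sigma)=0$.

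The heart of the argument is completeness. I would take a co-closed $\omega=\omega_\Sigma+\omega_t\,dt$ with $i^*\omega=0$ on $\partial M$ that is $L^2$-orthogonal to all three families, and prove $\omega=0$. Testing against the first family and integrating by parts in $x$ and $t$, both boundary contributions vanish, because $w^D_k|_{\partial\Sigma}=0$ kills the $\partial\Sigma$-term and $v_j'=0$ at $t=0,h$ kills the $t$-term; using $\delta\omega=0$, i.e. $\delta_\Sigma\omega_\Sigma=-(\omega_t)_t$, one obtains $(\mu_k^D+\eta_j(h))\int_M w^D_k v_j\,\omega_t=0$, and completeness of $\{w^D_k v_j\}$ in $L^2(M)$ forces $\omega_t\equiv0$, whence $\delta_\Sigma\omega_\Sigma=0$ for a.e. $t$. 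Testing against the second family, the boundary term over $\partial\Sigma$ involves the \emph{tangential} component of $\omega_\Sigma$, which vanishes by the lateral relative condition; completeness of the Neumann eigenfunctions then shows $\star d_\Sigma\omega_\Sigma$ is constant in $x$, and Stokes' theorem together with $i^*_{\partial\Sigma}\omega_\Sigma=0$ makes that constant zero, so $d_\Sigma\omega_\Sigma=0$. At this stage $\omega_\Sigma(t,\cdot)$ is closed, co-closed, and satisfies the relative condition on $\partial\Sigma$, hence lies in $\mathcal H_R(\Sigma)$ for a.e. $t$; testing against the third family shows $\omega_\Sigma\perp\mathcal H_R(\Sigma)$, so $\omega_\Sigma=0$ and $\omega=0$.

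The main obstacle I anticipate is the bookkeeping of the boundary integrals over $\partial\Sigma$ generated by each integration by parts: the whole scheme hinges on the exact matching between the boundary conditions imposed on the test forms (Dirichlet on $\Sigma$ in the first family, Neumann in the second) and the components of $\omega$ that the lateral relative condition annihilates, so that every stray boundary term is zero. A secondary delicate point is the identification of the correct space $\mathcal H_R(\Sigma)$ of relative harmonic representatives and its dimension $2\gamma+b$ via the decomposition \eqref{hodge_morrey}, which is what simultaneously guarantees completeness and the stated multiplicity of the third family; once these are settled, the separation-of-variables and orthogonality steps transfer directly from Theorem \ref{thm_main_prod}.
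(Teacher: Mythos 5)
Your proposal is correct and follows essentially the same route as the paper: the same three separated-variables families, the same matching of lateral boundary conditions (Dirichlet eigenfunctions for the first family, Neumann for the second, relative harmonic forms for the third), and the same completeness-by-orthogonality scheme, with the boundary terms killed exactly as you describe. The only organizational difference is that the paper extracts the $b$ extra copies of $d_j(h)$ from a degenerate case of the first-family ansatz (harmonic functions locally constant on $\partial\Sigma$), whereas you absorb them directly into a basis of the $(2\gamma+b)$-dimensional space of harmonic $1$-forms with relative conditions — which is how the theorem statement itself lists them.
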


\begin{proof}
During this proof we will use the same notation for the families of eigenfunctions, as introduced in the proof of Theorem \ref{thm_main_prod}.

{\bf First family.} For the first family $F=d_{\Sigma}f_t+(\Delta_{\Sigma}f)dt$ the additional boundary condition reads $\Delta_{\Sigma}f=0$  and $d_{\partial\Sigma}f_t=0$ on on $\partial\Sigma\times(0,h)$, which, with the ansatz $f(x,t)=w(x)v(t)$ becomes
$$
\Delta_{\Sigma}w|_{\partial\Sigma}v(t)=0 {\ \ \ \rm and\ \ \ }d_{\partial\Sigma}w|_{\partial\Sigma}v'(t)=0\,,\ \ \ t\in(0,h).
$$
Recall that the equations to be solved are
$$
\begin{cases}
\Delta_{\Sigma}^2w+(c-\lambda)\Delta_{\Sigma}w=0\,, & {\rm in\ }\Sigma\\
-v''(t)=cv(t)\,, & {\rm in\ }(0,h)\\
\Delta_{\Sigma}w|_{\partial\Sigma}v(t)=0\,, & {\rm on\ }\partial\Sigma\times (0,h)\,,\\
d_{\partial\Sigma}w|_{\partial\Sigma}v'(t)=0\,, & {\rm on\ }\partial\Sigma\times (0,h)\\
d_{\Sigma}w v'(0)=d_{\Sigma}g v'(h)=0.
\end{cases}
$$
Note that $w\equiv{\rm const}$ on $\Sigma$ is not admissible, otherwise $F=0$; the last condition then reads $v'(0)=v'(h)=0$. Therefore $c=\eta_j(h)$ is a Neumann eigenvalue on $(0,h)$.
Setting $\phi=\Delta_{\Sigma}w$, we have that $\phi$ is an eigenfunction of the Dirichlet Laplacian on $\Sigma$ with eigenvalue $\mu^D=\lambda-\eta_j(h)$, $j\geq 1$. Now, for $j=1$ we have $\eta_1(h)=0$ and $v(t)={\rm const}$. Also, $\Delta_{\Sigma}(\phi+\mu^Dw)=0$, which means that $w=\phi+har$ is a linear combination of a Dirichlet eigenfunction $\phi$ and an harmonic function $har$. However, we may choose $w$ to be a Dirichlet eigenfunction, because for $j=0$ we have $F=\Delta_{\Sigma}(\phi+har)dt=(\Delta_{\Sigma}\phi) dt$. For $\eta_j(h)\ne\eta_1(h)=0$ the boundary condition imposes that $w$ is constant on any connected component of the boundary. In this case we have that
\begin{equation}\label{proof:harmonic_prob}
\begin{cases}
\Delta_{\Sigma}(\phi+\mu^Dw)=0\,, & {\rm in\ }\Sigma\,,\\
\phi+\mu^Dw={\rm const} \,, & {\rm on\ each\ connect.\ comp.\ of\ }\partial\Sigma.
\end{cases}
\end{equation}
If $\Sigma$ has just one connected component of the boundary, then the only solution is $\phi+\mu^Dw\equiv{\rm const}$; we may then choose $w$ to be an eigenfunction of the Dirichlet Laplacian on $\Sigma$ with eigenvalue $\mu^D$, since adding a constant does not change $F$.  If $\partial\Sigma$ has $b+1$ connected components, there exists $b$ independent solutions of \eqref{proof:harmonic_prob}, which contribute to the spectrum. Altogether we have identified a family of eigenvalues in the form
$$
\lambda=\mu_k^D+\eta_j(h)\,,\ \ \ k,j\geq 1
$$
and if we have $b+1$ connected components of $\partial\Omega$, we have $b$ copies of $\eta_2(h),\eta_3(h),\cdots$, which are the positive Neumann eigenvalues on $(0,h)$, or, equivalently, we have $b$ copies of $d_1(h),d_2(h),...$ which are the Dirichlet eigenvalues on $(0,h)$. We will list these eigenvalues corresponding to non-trivial topology of the boundary $\p \Sigma$ in the third family here below.

\medskip

{\bf Second family.} For the second family, the ansatz is $F=\star d_{\Sigma}f$. 
We recall that for $\Sigma$ without boundary, the boundary condition at $\Sigma\times\{0,h\}$ reads $\star d_{\Sigma}f=0$ which is equivalent to $d_{\Sigma}f=0$. \\
Considering now the $\p \Sigma \neq \emptyset$ case and setting $f(x,t)=w(x)u(t)$, the condition $d_{\Sigma}f=0$ amounts to $u(0)d_{\Sigma}w=u(h)d_{\Sigma}w=0$. On the lateral boundary $\partial\Sigma\times (0,h)$, the boundary condition implies that $u(t)\star d_{\Sigma}w$ must be normal, forcing $\partial_{\nu_{\Sigma}}w=0$ on $\partial\Sigma$. We end up with
$$
\begin{cases}
u''(t)d_{\Sigma}w-u(t)d_{\Sigma}\Delta_{\Sigma}w=-\lambda u(t)d_{\Sigma}w\,,  & {\rm in\ }M\\
u(0)d_{\Sigma}w=u(h)d_{\Sigma}w=0\,,\\
\partial_{\nu_{\Sigma}}w=0\,, & {\rm on\ }\partial\Sigma.
\end{cases}
$$
Following then the proof of Theorem \ref{thm_main_prod}, we conclude that all the eigenvalues are given by
$$
\lambda=\mu_k^N+d_j(h)\,,\ \ \ j=1,..., k=2,...
$$
with $\mu_k^N$ and $d_j(h)$ as in the statement of the Theorem.


\medskip

{\bf Third family.} Finally, for genus $\gamma\geq 1$, we have a third family given by $u_j(t)H_k(x)$, where 
$\{H_k(x)\}_{k=1}^{2\gamma}$ is such that $\{H_k\}_{k=1}^{2\gamma}\cup\{d_{\Sigma}\psi_k\}_{k=1}^{b}$ is a basis of  the harmonic $1$-forms on $\Sigma$ with relative boundary conditions. Here $\psi_k$ are defined by $\Delta\psi_k=0$ in $\Sigma$, $\psi_k={\rm const}\ne 0$ on $\partial\Sigma_k$ and $\psi_k= 0$ on $\partial\Sigma_i$, $i\ne k$, for $k=1,...,b$, where $\Sigma_i$, $i=1,...,b+1$ are the connected components on $\partial\Sigma$ (these are the functions found in the analysis of the first family). In fact, the first relative cohomology of a surface $\Sigma$ of genus $\gamma$ and $b+1$ boundary components is isomorphic to $\mathbb Z^{2\gamma+b}$.

Thus, we get the additional family of eigenvalues
$$
\lambda=d_j(h)\,,\ \ \ j\geq 1.
$$
each repeated $2\gamma$ times. We include in this family also the eigenfunctions defined though the functions $\psi_k$ found in the analysis of the first family.

The completeness of eigenfunctions is proved exactly as in Theorem \ref{thm_main_prod}.
\end{proof}

We observe that all the eigenvalues diverge to $+\infty$ as $h\to 0^+$, except for the family $\mu_k^D+\eta_1(h)=\mu_k^D$, $k\geq 2$. 
We state this results explicitly in the following corollary.

\begin{cor}\label{cor:prod:2}
Let $\lambda_j(h,g_p)$ denote the eigenvalues of the product manifold $(M,g_p)$ where $M=\Sigma\times(0,L)$. Then
$$
\lim_{L\to 0}\lambda_j(h,g_p)=\mu_j^D,
$$
where $\mu_j^D$ are the Dirichlet Laplacian eigenvalues of $(\Sigma,g_{\Sigma})$.
\end{cor}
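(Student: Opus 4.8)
The plan is to reduce everything to the explicit spectral description already obtained in Theorem~\ref{thm_main_prod_2}, so that only an elementary sorting-and-counting argument remains; the analysis runs exactly parallel to the remark preceding Corollary~\ref{cor:prod:1}. First I would record the asymptotics of the one-dimensional factors as $h\to0^+$. The Neumann eigenvalues on $(0,h)$ are $\eta_j(h)=\pi^2(j-1)^2/h^2$, so that $\eta_1(h)=0$ for every $h$, while $\eta_j(h)\to+\infty$ for $j\geq2$; the Dirichlet eigenvalues $d_j(h)=\pi^2 j^2/h^2$ all diverge, and in fact $\min\{\eta_2(h),d_1(h)\}=\pi^2/h^2\to+\infty$.

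Next I would split the three families of Theorem~\ref{thm_main_prod_2} into a bounded part and a divergent part. Only the subfamily of (i) with $j=1$ remains bounded: its eigenvalues are $\mu_k^D+\eta_1(h)=\mu_k^D$, \emph{independent of $h$}. Every other listed eigenvalue is bounded below by $\pi^2/h^2$, since the members of (i) with $j\geq2$ satisfy $\mu_k^D+\eta_j(h)\geq\eta_2(h)=\pi^2/h^2$, while the families (ii) and (iii) are bounded below by $d_1(h)=\pi^2/h^2$ (using $\mu_k^N\geq0$). Because Theorem~\ref{thm_main_prod_2} asserts that these three families exhaust the spectrum, this cleanly separates the $h$-independent Dirichlet values from the escaping ones.

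A counting argument then finishes the proof. Fix $j\in\mathbb{N}$ and pick $\Lambda>\mu_j^D$ with $\Lambda\notin\{\mu_k^D : k\geq1\}$; let $N=\#\{k:\mu_k^D<\Lambda\}\geq j$. For every $h<\pi/\sqrt{\Lambda}$ one has $\pi^2/h^2>\Lambda$, so by the previous paragraph the only eigenvalues of $(M,g_p)$ lying below $\Lambda$ are the Dirichlet values $\mu_1^D\leq\cdots\leq\mu_N^D$, counted with multiplicity. As $N\geq j$, the $j$-th eigenvalue in increasing order satisfies $\lambda_j(h,g_p)=\mu_j^D$ for all such $h$; letting $h\to0^+$ gives the claim. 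Indeed the convergence is eventually an exact equality, a direct consequence of $\eta_1(h)\equiv0$.

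I do not expect a genuine obstacle here, since the substantive work — the complete and exhaustive diagonalization of problem~\eqref{hodge_coclosed} — was already carried out in Theorem~\ref{thm_main_prod_2}. The only point requiring care is the bookkeeping of multiplicities in the counting step, together with the choice of $\Lambda$ avoiding the (discrete) Dirichlet spectrum, so that the inventory of sub-$\Lambda$ eigenvalues is unambiguous.
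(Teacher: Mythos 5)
Your proof is correct and takes essentially the same approach as the paper: both read off the conclusion from the exhaustive spectral description of Theorem \ref{thm_main_prod_2}, observing that only the subfamily $\mu_k^D+\eta_1(h)=\mu_k^D$ ($k\geq 1$) stays bounded while every other eigenvalue is at least $\pi^2/h^2\to+\infty$. Your explicit counting step (choosing $\Lambda$ off the discrete Dirichlet spectrum and comparing the inventories of eigenvalues below $\Lambda$) simply makes rigorous the bookkeeping the paper leaves as a one-line remark, and also records the slightly stronger fact, implicit in the paper, that $\lambda_j(h,g_p)=\mu_j^D$ exactly for all sufficiently small $h$.
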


\section{Convergence of eigenvalues on tubes around embedded surfaces: proof of Theorem \ref{main_convergence}}\label{sec:main}

We will now proceed to prove Theorem \ref{main_convergence}. The proof strategy will be the following: we relate the eigenvalues of the tube $\Omega_h$ with those of the product manifold $(M,g_p)$, where
$$
M=\Sigma\times(0,h)
$$
and $g_p$ is the product metric; then we show that for $h\to 0^+$ the two sequences of eigenvalues become arbitrarily close and conclude using Theorems \ref{thm_main_prod} and \ref{thm_main_prod_2}.

Throughout this section, $\Sigma$ is a smooth, compact, embedded orientable surface in $\mathbb R^3$, and  $g_{\Sigma}$ is the Riemannian metric on $\Sigma$ induced by the ambient Euclidean space. Let  $\Omega_h$ be defined by \eqref{tube}, namely, $\Omega_h:=\{x+t\nu:t\in(0,h)\,,x\in\Sigma\}$ where $\nu$ is a choice of the unit normal to $\Sigma$ vector field to $\sigma$. Since $\Sigma$ is embedded and smooth, there exists $h_0>0$ such that, for all $h\in(0,h_0)$, the parallel surface to $\Sigma$ at distance $h$ is smooth, and moreover $\Omega_h$ is diffeomorphic to $M:=\Sigma\times(0,h)$ through $\phi:\Sigma\times(0,h)\to\Omega_h$.  We may then use Fermi coordinates $(x,t)\in M=\Sigma\times (0,h)$. Moreover, the domain $\Omega_h$ with the Euclidean metric $g_E$ is isometric to $(M,\phi^*g_E)$, where $\phi^*g_E$ is the pull-back of the Euclidean metric through $\phi$. To abbreviate, we write
$$
g_F:=\phi^*g_E
$$
for the pull-back of the Euclidean metric on $M$.
One also sees that
$$
g_F=g_p\,,\ \ \ {\rm on\ }\Sigma\times\{0\},
$$
 where $g_p=g_{\Sigma}+dt^2$ is the product metric on $M=\Sigma\times (0,h)$. Since $M$ is compact, we deduce that, for all $h>0$ sufficiently small,
\begin{equation}\label{quasi_isom}
\|g_F-g_p\|_{C^2(M)}<Ch\,, \quad \|g_F^{-1}-g_p^{-1}\|_{C^2(M)}<Ch,
\end{equation}
uniformly in $M$. See e.g., \cite[\S 2]{brisson}. From now on, by $C$ we denote a positive constant which does not depend on $h$ and which may be re-defined line by line.

By $\lambda_j(h,g_p)$ we denote the eigenvalues of \eqref{hodge_coclosed} (restricted to co-closed forms) on $(M,g_p)$ and by $\lambda_j(h,g_F)$ the eigenvalues of \eqref{hodge_coclosed} (restricted to co-closed forms) on $(M,g_F)$. From the discussion above we have that
$$
\lambda_j(\Omega_h)=\lambda_j(h,g_F)
$$
where $\lambda_j(\Omega_h)$ are the eigenvalues of \eqref{eq:curlcurl}.

To prove Theorem \ref{main_convergence}, we compare the Rayleigh quotients defining $\lambda_j(h,g_F)$ and $\lambda_j(h,g_p)$. We have
$$
\lambda_j(h,g_F)=\inf_{\substack{U\subset V_F\\{\rm dim}\,U=j}}\max_{0\ne u\in U}\frac{\int_{M}|du|^2_{g_F}dv_{g_F}}{\int_{M}|u|^2_{g_F}dv_{g_F}}.
$$
where $V_F=\{u:\delta_{g_F} u=0{\rm\ in\ }M,i^*u=0{\rm\ on\ }\partial M\}$, that is, the subspace of $1$-forms that are co-closed and normal to $\partial M$. 
Analogously, we have that
\begin{equation}\label{minmax_p}
\lambda_j(h,g_p)=\inf_{\substack{U\subset V_p\\{\rm dim}\,U=j}}\max_{0\ne u\in U}\frac{\int_{M}|du|^2_{g_p}dv_{g_p}}{\int_{M}|u|^2_{g_p}dv_{g_p}}.
\end{equation}
where $V_p=\{u:\delta_{g_p} u=0{\rm\ in\ }M,i^*u=0{\rm\ on\ }\partial M\}$.
The two spaces $V_F$ and $V_p$ are not the same since the codifferential $\delta$ depends on the metric.

\medskip

However, given $u$ such that $\delta_{g_F}u\ne 0$, but $\delta_{g_p} u=0$, we can replace it by $u+dv$ where $v$ satisfies $\delta_{g_F} dv=-\delta_{g_F} u$, $v=0$ on $\partial M$. This is just
\begin{equation}\label{projection}
\begin{cases}
\Delta_Fv=-\delta_{g_F}u\,, & {\rm in\ }M\,,\\
v=0\,, & {\rm on\ }\partial M.
\end{cases}
\end{equation}
That is, we have projected $u$ on the subspace of co-closed $1$-forms for the metric $g_F$. But now
$$
d(u+dv)=du+d^2v=du.
$$
Summarizing, for any $u\in V_p$, there exists a unique $\tilde u\in V_F$, $\tilde u=u+dv$, with $\delta_{g_F}\tilde u=0$, $du=d\tilde u$, and $\tilde u$ is tangential for $g_F$. This last fact follows since for a solution $v$ of \eqref{projection}, $dv$ is tangential, and since a $1$-form $u$ is tangential for $g_F$ if and only if it is tangential for $g_p$. This is due to the structure of the metrics $g_F,g_p$ which, in local coordinates, assume the form of a block diagonal matrix splitting the components along $T\Sigma$  and $(0,h)$. Hence we can write

\begin{equation}\label{minmax_h2}
\lambda_j(h,g_F)=\inf_{\substack{U\subset V_p\\{\rm dim}\,U=j}}\max_{0\ne u\in U}\frac{\int_{M}|du|^2_{g_F}dv_{g_F}}{\int_{M}|u+dv|^2_{g_F}dv_{g_F}}.
\end{equation}
Now, we have
$$
\int_{M}|dv|_{g_F}^2=-\int_{M}\langle u,dv\rangle_{g_F}=-\int_{M}\langle u,dv\rangle_{g_F}+\int_{M}\langle u,dv\rangle_{g_p},
$$
where the first identity follows by multiplying  \eqref{projection} by $v$ and integrating by parts.
We have added the last summand which equals zero since $\delta_{g_p}u=0$. Hence
\begin{equation}\label{esti}
\int_{M}|dv|_{g_F}^2\leq\left|\int_{M}\langle u,dv\rangle_{g_F}-\int_{M}\langle u,dv\rangle_{g_p}\right|
\end{equation}
From \eqref{quasi_isom} and standard manipulations of the right-hand side of \eqref{esti} we deduce that there exists a constant $C$ not depending on $u,v$ such that
\begin{equation}\label{auxil_est}
\int_{M}|dv|^2_{g_F}\leq Ch\int_{M}|u|_{g_F}^2.
\end{equation}
Again, using \eqref{quasi_isom} and \eqref{auxil_est} we deduce the existence of a constant $C>0$ not depending on $u$ such that, for all $u\in V_p$,
\begin{multline}\label{almost_minmax}
(1-Ch)\frac{\int_{M}|du|^2_{g_p}dv_{g_p}}{\int_{M}|u|^2_{g_p}dv_{g_p}} \leq\frac{\int_{M}|du|^2_{g_F}dv_{g_F}}{\int_{M}|u+dv|^2_{g_F}dv_{g_F}} \leq (1+Ch) \frac{\int_{M}|du|^2_{g_p}dv_{g_p}}{\int_{M}|u|^2_{g_p}dv_{g_p}}.
\end{multline}
The result now follows from \eqref{minmax_p} and \eqref{minmax_h2}: for all $j$ we have
\begin{equation}\label{estimate_ev}
(1-Ch)\lambda_j(h,g_p)\leq\lambda_j(h,g_F)\leq(1+Ch)\lambda_j(h,g_p).
\end{equation}
This concludes the proof of Theorem \ref{main_convergence} since $\lambda_j(h,g_F)=\lambda_j(\Omega)$.

\section{Convergence of eigenfunctions}\label{sec:conv:eig}

In this section we establish a convergence result for eigenfunctions. Throughout this section
$$
M=\Sigma\times(0,h).
$$
We start by observing that, if we consider the eigenvalues $\Lambda_j(h,g_p)$ of the (full) Hodge Laplacian with relative boundary conditions on $(M,g_p)$, namely,
\begin{equation}\label{full_hodge_conv}
\begin{cases}
\Delta\omega=\Lambda\omega\,, & {\rm in\ }M\,,\\
i^*\omega=i^*\delta\omega=0\,, & {\rm on\ }\partial M,
\end{cases}
\end{equation}
then for all $j\in\mathbb N$, $\lim_{h\to 0^+}\Lambda_j(h,g_p)=\mu_j$, where $\mu_j,\mu_j^D$ are the eigenvalues of the Laplacian on $\Sigma$ (if $\partial M=\emptyset$). If $\partial\Sigma\ne\emptyset$, the same statement holds with $\mu_j^D$ as limit eigenvalue. Moreover, for any fixed $j\in\mathbb N$, there exists $h_j>0$ such that $\Lambda_j(h,g_p)=\mu_j$ for all $h<h_j$, and a basis of the corresponding eigenspace is given by $\{w_i(x)dt\}_{i=1}^{m_j}$, where $w_i$ are the eigenfunctions of the Laplacian on $\Sigma$ (with Dirichlet conditions if $\partial\Sigma\ne\emptyset$) associated with $\mu_j$. We refer to Appendix \ref{sec:full} for more details on the full Hodge Laplacian spectrum on product manifolds.

Moreover, $\delta w_j(x)dt=0$, hence the only eigenfunctions associated with Hodge Laplacian eigenvalues which admit a finite limit are co-closed. It is not difficult to see that these eigenfunctions correspond (up to scalar multiples) to the eigenfunctions $\delta d(w_j(x)dt)$ of the first family in Theorems \ref{thm_main_prod} and \ref{thm_main_prod_2}, which in turn are exactly those providing the co-closed eigenvalues that have a finite limit. 
Now note that $\delta d(w_jdt)=\Delta(w_jdt)=\mu_j w_j dt$. All these statements still hold when $\partial\Omega\ne\emptyset$, up to replacing $\mu_j$ with $\mu_j^D$.

Let us now consider problem \eqref{full_hodge_conv} on $(M,g_F)$.
The Hodge-Morrey decomposition \eqref{hodge_morrey} implies that the spectrum of \eqref{full_hodge_conv} on $(M,g_F)$
is given by the union of the co-closed spectrum, (i.e., the spectrum of \eqref{full_hodge_conv} restricted to co-closed $1$-forms), and the spectrum of \eqref{full_hodge_conv} restricted to the space of gradients of functions in $H^1_0(M)$. But, as $h\to 0^+$, all the eigenvalues of \eqref{full_hodge_conv} associated with eigenfunctions that are gradients of functions in $H^1_0(M)$ diverge to $+\infty$. Hence, if we denote by $\Lambda_j(h,g_F)$ the spectrum of \eqref{full_hodge_conv} for $(M,g_F)$, we have that for any fixed $j\in\mathbb N$ there exists $h_0>0$ such that $\Lambda_i(h,g_G)=\lambda_i(h,g_F)$ for all $i=1,...,j$. Recall that by $\lambda_j(h,g_F)$ and $\lambda_j(h,g_p)$ we have denoted the spectrum of \eqref{full_hodge_conv} restricted to co-closed forms.

We are now ready to prove the following

\begin{thm}\label{thm:eigenfunctions}
Let $\mu_j$ be an eigenvalue of the Laplacian on $\Sigma$ (with Dirichlet conditions if $\partial\Sigma\ne\emptyset$) and let $w$ be an associated $L^2$-normalized eigenfunction. Then there exists $h_0>0$ and an eigenfunction $u$ of problem \eqref{full_hodge_conv} on  $(M,g_F)$ associated with $\Lambda_j(h,g_F)$ satisfying
$$
\left\|h^{-1/2}wdt-u\right\|_{L^2\Omega^1(M)}\leq Ch
$$
for all $h<h_0$, where $C>0$ is a constant depending only on $\mu_j$ and $\Sigma$.
\end{thm}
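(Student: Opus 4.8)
The plan is to produce $u$ as a spectral projection, onto the eigenvalues of \eqref{full_hodge_conv} clustering near $\mu_j$, of a suitable correction of $h^{-1/2}w\,dt$. Normalise $w$ so that $\|w\|_{L^2(\Sigma)}=1$ and set $\tilde w:=h^{-1/2}w\,dt$, so that $\|\tilde w\|_{L^2\Omega^1(M,g_p)}=1$; by the first-family analysis recalled at the beginning of this section, $\tilde w$ is co-closed for $g_p$ and $\delta_{g_p}d\tilde w=\Delta_{g_p}\tilde w=\mu_j\tilde w$ exactly. It satisfies $i^*\tilde w=0$ (trivially on the faces $\Sigma\times\{0,h\}$, and on $\partial\Sigma\times(0,h)$ because $w$ is a Dirichlet eigenfunction when $\partial\Sigma\ne\emptyset$), but it is \emph{not} co-closed for $g_F$. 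First I would project it onto the $g_F$-co-closed forms exactly as in \eqref{projection}: with $u=\tilde w$, let $v$ solve $\Delta_F v=-\delta_{g_F}\tilde w$ in $M$, $v=0$ on $\partial M$, and put $\hat w:=\tilde w+dv$. Then $\delta_{g_F}\hat w=0$, $d\hat w=d\tilde w$, and $i^*\hat w=i^*\tilde w+i^*dv=0$ (since $i^*dv=d(i^*v)=0$ as $v=0$ on $\partial M$), so $\hat w$ belongs to the form domain of \eqref{full_hodge_conv} on $(M,g_F)$ and is co-closed for $g_F$. Moreover, applying the Cauchy--Schwarz inequality to the right-hand side of \eqref{esti} and using \eqref{quasi_isom} gives the bound $\|dv\|_{L^2(M,g_F)}\le Ch\,\|\tilde w\|_{L^2(M,g_F)}\le Ch$ (sharper than \eqref{auxil_est}, but by the same computation); in particular $\|\tilde w-\hat w\|_{L^2(M,g_F)}\le Ch$ and $\|\hat w\|_{L^2(M,g_F)}=1+O(h)$.

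Next I would show that $\hat w$ is an approximate eigenform in the strong sense, namely $\|(A_F-\mu_j)\hat w\|_{L^2(M,g_F)}\le Ch$, where $A_F$ is the self-adjoint realisation of \eqref{full_hodge_conv} on $(M,g_F)$. Since $\hat w$ is $g_F$-co-closed, the form associated with \eqref{full_hodge_conv} satisfies, for every $\phi$ in the form domain,
\[
\int_M\langle d\hat w,d\phi\rangle_{g_F}\,dv_{g_F}=\int_M\langle d\tilde w,d\phi\rangle_{g_F}\,dv_{g_F}=\int_M\langle\delta_{g_F}d\tilde w,\phi\rangle_{g_F}\,dv_{g_F},
\]
the last identity by integration by parts, the boundary term vanishing because it carries the factor $i^*\phi=0$. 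Testing against all such $\phi$ shows $\hat w\in\dom(A_F)$ with $(A_F-\mu_j)\hat w=r$, where $r:=\delta_{g_F}d\tilde w-\mu_j\hat w$. Using $\delta_{g_p}d\tilde w=\mu_j\tilde w$ I would split $r=(\delta_{g_F}-\delta_{g_p})d\tilde w+\mu_j(\tilde w-\hat w)$; the second term is $\mu_j\|dv\|_{L^2}=O(h)$. For the first, write $d\tilde w=h^{-1/2}\beta_0$ with $\beta_0:=d_\Sigma w\wedge dt$ a fixed smooth $2$-form whose $C^1$-norm is controlled by $\|w\|_{C^2(\Sigma)}$; since the codifferential is first order in the metric, \eqref{quasi_isom} yields $|(\delta_{g_F}-\delta_{g_p})\beta_0|\le Ch$ pointwise, hence $|(\delta_{g_F}-\delta_{g_p})d\tilde w|\le Ch^{1/2}$, and integrating over $M$, whose volume is $O(h)$, gives $\|(\delta_{g_F}-\delta_{g_p})d\tilde w\|_{L^2(M,g_F)}\le Ch$. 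This is where the main difficulty lies: the perturbation $g_F-g_p$ sits in the top-order term of the form, so the residual cannot be controlled in $L^2$ directly; the device is to integrate by parts onto the exact product eigenrelation $\delta_{g_p}d\tilde w=\mu_j\tilde w$, after which the $h^{-1/2}$ blow-up of $d\tilde w$ is defeated by the $O(h)$ size of the metric perturbation together with the $O(h)$ volume of $M$. Altogether $\|r\|_{L^2(M,g_F)}\le Ch$.

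Finally I would convert the approximate eigenform into a genuine eigenfunction by a spectral projection. By \eqref{estimate_ev} together with Corollaries \ref{cor:prod:1} and \ref{cor:prod:2}, each low-lying eigenvalue obeys $\lambda_i(h,g_F)=\mu_i+O(h)$ (resp. $\mu_i^D+O(h)$ when $\partial\Sigma\ne\emptyset$), while the eigenvalues whose eigenforms are gradients diverge, so $\Lambda_i(h,g_F)=\lambda_i(h,g_F)$ for the relevant indices, as recalled above. Letting $\gamma_0>0$ be, say, half the distance from $\mu_j$ to the remaining distinct Laplacian eigenvalues, for $h$ small $\sigma(A_F)\cap[\mu_j-\gamma_0,\mu_j+\gamma_0]$ consists only of the eigenvalues of the $\mu_j$-cluster, all within $Ch$ of $\mu_j$. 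If $P$ denotes the corresponding spectral projection, the spectral theorem and $\|(A_F-\mu_j)\hat w\|\le Ch$ give $\|(I-P)\hat w\|_{L^2(M,g_F)}\le\gamma_0^{-1}\|(A_F-\mu_j)\hat w\|\le Ch$. Taking $u:=P\hat w$---which is an eigenfunction associated with $\Lambda_j(h,g_F)$ when $\mu_j$ is simple, and in general an element of the corresponding eigencluster---we obtain $\|\tilde w-u\|_{L^2(M,g_F)}\le\|\tilde w-\hat w\|+\|\hat w-u\|\le Ch$. Since by \eqref{quasi_isom} the $g_F$- and $g_p$-induced norms on $L^2\Omega^1(M)$ are equivalent up to a factor $1+O(h)$, the same estimate holds in $L^2\Omega^1(M)$, with $C$ depending only on $\mu_j$ and $\Sigma$ (through $\|w\|_{C^2(\Sigma)}$ and $\gamma_0$), as claimed.
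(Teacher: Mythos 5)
Your proposal is correct, and it reaches the theorem by a genuinely different route from the paper. The paper takes the product eigenfunction $v=h^{-1/2}w\,dt$ and a true $g_F$-eigenfunction $u$ chosen so that $u-v$ is orthogonal to the eigenspace $\Theta_h$ of $\Lambda_j(h,g_F)$; it then observes that $u-v$ solves an \emph{inhomogeneous} relative boundary value problem with interior source $F_h=(\Delta_{g_F}-\Delta_{g_p})v-(\Lambda_j(h,g_F)-\Lambda_j(h,g_p))v$ and nonzero boundary datum $q_h=i^*(\delta_{g_p}v-\delta_{g_F}v)$ (because $v$ is not $g_F$-co-closed), and closes the estimate with a Fredholm-alternative a priori bound in which the spectral gap appears, together with a uniform $H^2$ bound on $v$ and the trace inequality. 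You instead repair the constraint \emph{first}: using the very same auxiliary Dirichlet problem \eqref{projection} that the paper employs in Section \ref{sec:main}, you replace $\tilde w=h^{-1/2}w\,dt$ by the $g_F$-co-closed form $\hat w=\tilde w+dv$, which satisfies the relative conditions exactly and lies in the operator domain, so there is no boundary mismatch to estimate; then you run the textbook quasimode argument. Your bookkeeping for the residual is right: the Cauchy--Schwarz sharpening $\|dv\|_{L^2}\le Ch$ of \eqref{auxil_est} handles $\mu_j\,dv$, while for $(\delta_{g_F}-\delta_{g_p})d\tilde w$ the pointwise bound $Ch^{1/2}$ (from \eqref{quasi_isom} applied to the fixed $2$-form $d_\Sigma w\wedge dt$, scaled by $h^{-1/2}$) against $\mathrm{vol}(M)^{1/2}=O(h^{1/2})$ gives $O(h)$ in $L^2$, and the spectral theorem plus the gap $\gamma_0$ yields $\|(I-P)\hat w\|\le Ch$. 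What your route buys is economy: no solvability theory for a problem with inhomogeneous $i^*\delta$-data, no trace estimates, and full reuse of the Section \ref{sec:main} machinery; what it loses is exactly the point you flag at the end, namely that $P\hat w$ is a single eigenfunction associated with $\Lambda_j(h,g_F)$ only if the spectral cluster near $\mu_j$ does not split, i.e.\ $\Lambda_j(h,g_F)=\cdots=\Lambda_{j+m_j-1}(h,g_F)$. This, however, is not a deficit relative to the paper: the paper's proof does not prove non-splitting either, but \emph{assumes} it (``we may further assume that $\Lambda_{j-1}(h,g_F)<\Lambda_j(h,g_F)=\cdots=\Lambda_{j+m_j-1}(h,g_F)<\Lambda_{j+m_j}(h,g_F)$''), so under the same hypothesis your projection lands in a genuine eigenspace and the two proofs establish the theorem in the same generality --- yours being the more candid about where the multiplicity issue lives.
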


In view of the previous discussion, from Theorem \ref{thm:eigenfunctions} we deduce the following corollary.
\begin{cor}\label{cor:eigenfunctions}
Let $\mu_j$ be an eigenvalue of the Laplacian on $\Sigma$ (with Dirichlet conditions if $\partial\Sigma\ne\emptyset$) and let $w$ be an associated $L^2$-normalized eigenfunction. Then there exists $h_0>0$ and an eigenfunction $u$ of problem \eqref{hodge_coclosed} associated with $\lambda_j(h,g_F)$ satisfying
$$
\left\|h^{-1/2}wdt-u\right\|_{L^2\Omega^1(M)}\leq Ch
$$
for all $h<h_0$,  where $C>0$ is a constant depending only on $\mu_j$ and $\Sigma$.
\end{cor}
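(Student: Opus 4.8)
The plan is to exhibit $\phi_0:=h^{-1/2}\,w\,dt$ as an approximate eigenfunction, with approximate eigenvalue $\mu_j$, of the self-adjoint Hodge Laplacian $A_F$ for the metric $g_F$ (with the relative conditions of \eqref{full_hodge_conv}), and then to convert this into genuine $L^2$-closeness to a true eigenfunction by a spectral gap argument. By the computation preceding the statement, for $h$ small $\phi_0$ is an exact eigenfunction of the product Hodge Laplacian with eigenvalue $\mu_j$; it is smooth, and it is $L^2(M,g_p)$-normalised since $\|w\,dt\|_{L^2(M,g_p)}^2=h\,\|w\|_{L^2(\Sigma)}^2$. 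A direct check gives $i^*\phi_0=0$ on $\partial M$: the pullback of $dt$ vanishes on $\Sigma\times\{0,h\}$, and $w|_{\partial\Sigma}=0$ in the Dirichlet case, so on the lateral boundary as well.

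First I would correct $\phi_0$ so that it lies in $\dom(A_F)$. Exactly as in \eqref{projection}, put $\tilde\phi_0:=\phi_0+dv$, where $v$ solves $\Delta_F v=-\delta_{g_F}\phi_0$ in $M$ with $v=0$ on $\partial M$. Then $\delta_{g_F}\tilde\phi_0=0$, $d\tilde\phi_0=d\phi_0$, and $i^*\tilde\phi_0=i^*\phi_0=0$ (because $i^*dv=d(v|_{\partial M})=0$), so $\tilde\phi_0$ satisfies both relative conditions of \eqref{full_hodge_conv} and, by elliptic regularity, belongs to $\dom(A_F)$. Since $\delta_{g_p}\phi_0=0$, the right-hand side of \eqref{esti} is the metric defect $|\int_M\langle\phi_0,dv\rangle_{g_F}-\int_M\langle\phi_0,dv\rangle_{g_p}|$, which by \eqref{quasi_isom} is at most $Ch\,\|\phi_0\|_{g_F}\|dv\|_{g_F}$; this sharpens \eqref{auxil_est} to the linear bound $\|dv\|_{g_F}\le Ch\,\|\phi_0\|_{g_F}$, hence $\|\tilde\phi_0-\phi_0\|_{g_F}\le Ch$.

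The heart of the matter is the strong estimate $\|(A_F-\mu_j)\tilde\phi_0\|_{L^2(M,g_F)}\le Ch$. As $\tilde\phi_0\in\dom(A_F)$, this is simply a bound on $\|\Delta_{g_F}\tilde\phi_0-\mu_j\tilde\phi_0\|_{L^2}$, with no boundary contributions. Using $\Delta_{g_p}\phi_0=\mu_j\phi_0$, I would split $\Delta_{g_F}\tilde\phi_0-\mu_j\tilde\phi_0=(\Delta_{g_F}-\Delta_{g_p})\phi_0+\Delta_{g_F}dv-\mu_j\,dv$. The first term is $O(h)$ in $L^2(M)$ because $\Delta_{g_F}-\Delta_{g_p}$ is a second-order operator with coefficients of size $O(h)$ in $C^0$ by \eqref{quasi_isom}, applied to $\phi_0$, whose $t$-dependence is trivial and whose $H^2(M)$-norm is $O(1)$ (the $h^{-1/2}$ normalisation being compensated by the length-$h$ integration in $t$); thus the $O(h)$ coefficient size survives as the net rate. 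For the remaining terms, $\Delta_{g_F}dv=-d\,\delta_{g_F}\phi_0$ (since $\delta_{g_F}dv=-\delta_{g_F}\phi_0$ and $d\,dv=0$) and $\delta_{g_F}\phi_0=(\delta_{g_F}-\delta_{g_p})\phi_0$ has $C^1$-norm $O(h)$ by \eqref{quasi_isom}, so $\|\Delta_{g_F}dv\|_{L^2}\le Ch$; and $\|\mu_j\,dv\|_{L^2}\le Ch$ by the previous paragraph.

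Finally I would invoke the spectral gap. By the eigenvalue comparison \eqref{estimate_ev}, together with the discussion showing that the low-lying eigenvalues of $A_F$ are co-closed and converge to the Laplacian eigenvalues of $\Sigma$, there is $\delta>0$ depending only on $\mu_j$ and $\Sigma$ such that, for all $h$ small, the only spectrum of $A_F$ in $(\mu_j-\delta,\mu_j+\delta)$ is $\Lambda_j(h,g_F)$. Writing $P$ for the corresponding spectral projection, the spectral theorem gives
\[
\delta^2\,\|\tilde\phi_0-P\tilde\phi_0\|_{g_F}^2\le\|(A_F-\mu_j)\tilde\phi_0\|_{g_F}^2\le C^2h^2,
\]
so $\|\tilde\phi_0-P\tilde\phi_0\|_{g_F}\le Ch$. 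Then $u:=P\tilde\phi_0/\|P\tilde\phi_0\|_{g_F}$ is a genuine unit eigenfunction of \eqref{full_hodge_conv} for $g_F$ associated with $\Lambda_j(h,g_F)$, and $\|\tilde\phi_0-u\|_{g_F}\le Ch$; combining with $\|\tilde\phi_0-\phi_0\|_{g_F}\le Ch$ and passing from the $g_F$-norm to the reference $L^2\Omega^1(M)$ norm via \eqref{quasi_isom} (the two differing by a factor $1+O(h)$) gives the claimed estimate. The main obstacle is precisely the strong bound of the third paragraph: one must make the co-closed correction so as to land in $\dom(A_F)$ and thereby kill all boundary terms, and one must keep the $h^{-1/2}$ normalisation aligned with the length-$h$ integration so that both the perturbation term and the corrector $dv$ are genuinely $O(h)$ and not merely $O(\sqrt h)$; the concluding spectral-gap step is then soft.
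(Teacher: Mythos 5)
Your proof is correct in substance, but it follows a genuinely different technical route from the paper's. The paper (proof of Theorem \ref{thm:eigenfunctions}, from which the corollary is then deduced) keeps the approximate eigenfunction $v=h^{-1/2}w\,dt$ untouched, picks a true eigenfunction $u$ with $u-v$ orthogonal to the eigenspace $\Theta_h$ of $\Lambda_j(h,g_F)$, and observes that $u-v$ solves an \emph{inhomogeneous} boundary value problem: interior source $F_h=(\Delta_{g_F}-\Delta_{g_p})v-(\Lambda_j(h,g_F)-\Lambda_j(h,g_p))v$ together with the nonzero boundary datum $q_h=i^*(\delta_{g_p}v-\delta_{g_F}v)=O(h)$, since $v$ fails the condition $i^*\delta_{g_F}v=0$; it then invokes the Fredholm alternative and an a priori estimate whose constant is the spectral gap and which requires a trace inequality for $q_h$. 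You instead remove the boundary mismatch at the outset: the correction $\tilde\phi_0=\phi_0+dv$ via \eqref{projection} --- the same device the paper uses in Section \ref{sec:main} for the eigenvalue comparison --- makes $\tilde\phi_0$ exactly co-closed for $g_F$, hence an element of the operator domain, so that everything reduces to a purely interior quasimode bound $\lVert(\Delta_{g_F}-\mu_j)\tilde\phi_0\rVert_{L^2}\le Ch$ followed by the standard spectral-theorem estimate $\delta\,\lVert\tilde\phi_0-P\tilde\phi_0\rVert\le\lVert(\Delta_{g_F}-\mu_j)\tilde\phi_0\rVert$. What your route buys is the elimination of the inhomogeneous-boundary a priori estimate (which the paper states without proof); what it costs is two extra estimates, which you supply correctly: the sharpened corrector bound $\lVert dv\rVert_{g_F}\le Ch\lVert\phi_0\rVert_{g_F}$, linear in $h$ rather than the $\sqrt{h}$ rate recorded in \eqref{auxil_est} (it does follow from \eqref{esti} by Cauchy--Schwarz), and the identity $\Delta_{g_F}dv=-d(\delta_{g_F}-\delta_{g_p})\phi_0$ combined with the uniform bound $\lVert\phi_0\rVert_{H^2}=O(1)$ to control $\lVert\Delta_{g_F}dv\rVert_{L^2}$. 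Both arguments ultimately rest on the same three ingredients: \eqref{quasi_isom}, the $h$-uniform $H^2$ bound on $h^{-1/2}w\,dt$, and the spectral gap around $\mu_j$.

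Two points you leave implicit and should state. First, your $u$ is produced as an eigenfunction of the full Hodge problem \eqref{full_hodge_conv}; to conclude that it is an eigenfunction of \eqref{hodge_coclosed}, as the corollary requires, you need the paper's observation at the beginning of Section \ref{sec:conv:eig}: by the Hodge--Morrey decomposition \eqref{hodge_morrey}, the eigenvalues associated with exact eigenforms diverge as $h\to0^+$, so for small $h$ the eigenspace of $\Lambda_j(h,g_F)$ consists of co-closed forms and $\Lambda_j(h,g_F)=\lambda_j(h,g_F)$. Second, your spectral-gap step tacitly assumes that the cluster of eigenvalues of $\Delta_{g_F}$ converging to $\mu_j$ does not split, so that $P$ projects onto a single eigenspace; if $\mu_j$ is multiple and the cluster splits, $P\tilde\phi_0$ is a combination of eigenfunctions with nearby but distinct eigenvalues, not an eigenfunction. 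This is not a gap relative to the paper --- the paper makes exactly the same assumption when it posits $\Lambda_{j-1}<\Lambda_j=\dots=\Lambda_{j+m_j-1}<\Lambda_{j+m_j}$ for both metrics --- but it deserves a sentence.
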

This last corollary, translated in terms of solutions of Maxwell's problem \eqref{eq:curlcurl}, says that, for all $h>0$ sufficiently small, given an eigenfunction $w$ on the limit surface $\Sigma$ associated with a limit eigenvalue $\mu_j$ (or $\mu_j^D$ if $\Sigma$ has a boundary), there exists an eigenfunction $u$ of \eqref{eq:curlcurl} associated with $\lambda_j(\Omega_h)$ which is close in $L^2(\Omega)^3$ to the constant extension of $w$ in the normal direction to $\Sigma$.

\begin{proof}[Proof of Theorem \ref{thm:eigenfunctions}]

Let $\{w_i\}_{i=1}^{m_j}$ is a orthonormal basis of the eigenspace associated with $\mu_j$. 
First, note that there exists $h_0>0$, such that, for any $h<h_0$ an orthonormal basis of the  eigenspace associated with $\Lambda_j(h,g_p)$ is given by $\{h^{-1/2}w_idt\}_{i=1}^{m_j}$. Let $v=h^{-1/2}\sum_{i=1}^{m_j}a_iw_i$ with $\sum_{i=1}^{m_j}a_i^2=1$. Then $v$ is a $L^2$-normalized eigenfunction of $(M,g_p)$ associated with $\Lambda_j(h,g_p)$. Possibly choosing a smaller $h_0$, we may further assume that $\Lambda_{j-1}(h,g_p)<\Lambda_j(h,g_p)=\Lambda_{j+1}(h,g_p)=\cdots=\Lambda_{j+m_j-1}(h,g_p)<\Lambda_{j+m_j}(h,g_p)$ and $\Lambda_{j-1}(h,g_F)<\Lambda_j(h,g_F)=\Lambda_{j+1}(h,g_F)=\cdots=\Lambda_{j+m_j-1}(h,g_F)<\Lambda_{j+m_j}(h,g_F)$.

 From \eqref{estimate_ev} and from the fact that, for $h_0$ sufficiently small $\lambda_j(h,g_p)=\Lambda_j(h,g_p)$ and $\lambda_j(h,g_F)=\Lambda_j(h,g_F)$, we have that $|\Lambda_j(h,g_F)-\Lambda_j(h,g_p)|<Ch$, and the multiplicities of $\Lambda_j(h,g_F)$ and $\Lambda_j(h,g_p)$ coincide.  Since the eigenfunctions in the product metric are explicit (see Section 3), we can assume $v \in H^2\Omega^1(M)$ for the metric $g_p$ (and also for $g_F$), uniformly in $h$, as the eigenfunctions of the  Laplacian in $\Sigma$ (with Dirichlet conditions in the case $\Sigma$ has a boundary) are smooth. Let $u$ be an eigenfunction associated with $\Lambda_j(h,g_F)$ such that $\int_{M}\langle u_i^j,(u-v)\rangle_{g_F}dv_{g_F}=0$ for all $i=1,...,m_j$, where $u_i^j$ is a $L^2$-orthonormal basis of the eigenspace $\Theta_h$ associated with $\Lambda_j(h,g_F)$ (for the metric $g_F$). Note that if $h_0$ is sufficiently small, then $u\ne 0$.
 
Now, consider the following problem
\begin{equation}\label{eq:u-v}
(\Delta_{g_F}-\Lambda_j(h,g_F)) (u-v)=(\Delta_{g_F}-\Delta_{g_p})v-(\Lambda_j(h,g_F)-\Lambda_j(h,g_p))v
\end{equation}
in $(M,g_F)$.
On the boundary, we have that $i^*(u-v)=0$ (the outer unit normal is the same for both the metrics). Moreover, $i^*\delta_{g_F}(u-v)=i^*(\delta_{g_p}v-\delta_{g_F}v)=O(h)$ by \eqref{quasi_isom}; indeed, $\delta_{g_F} u = \delta_{g_p} v = 0$, since, upon choosing $h$ small enough, $u$ and $v$ must be eigenfunctions of the Hodge operator restricted to co-closed 1-forms. Then $u-v$ belongs to the orthogonal of the kernel of $\Delta_{g_F}-\Lambda_j(h,g_F)$, it is normal to the boundary and its divergence is small ($O(h)$) at the boundary. Then, if $F_h = (\Delta_{g_F}-\Delta_{g_p})v-(\lambda_j(h,g_F)-\lambda_j(h,g_p))v$, $q_h = i^*(\delta_{g_p}v-\delta_{g_F}v)$, the form $w = u-v$ solves the following inhomogeneous problem for the Hodge Laplacian with relative boundary conditions:
\[
\begin{cases}
(\Delta_{g_F}-\lambda_j(h)) w = F_h\,,  & {\rm in\ }M\,, \\
i^*w = 0,\,\,\, i^*\delta_{g_F} w = q_h, & {\rm on\ }\partial M.
\end{cases}
\]
Since $w \in \Theta_h^\perp$ for each fixed $h>0$, the Fredholm alternative implies that this problem is solvable provided that $F_h \in \Theta_h^\perp$, which holds true in view of the identity \eqref{eq:u-v}.

Interpreting this problem in weak form, we  obtain the $L^2\Omega^1(M)$ (for the metric $g_F$) a priori: estimate
\begin{multline*}
{\rm dist}(\sigma(\Delta_{g_F}) \setminus \{\Lambda_j(\Omega_h)\}, \Lambda_j(\Omega_h)) \norm{w}^2_{L^2\Omega^1(M)}\\ \leq C (\norm{F_h}^2_{L^2\Omega^1(M)} + \norm{q_h}^2_{L^2(\partial M)}),
\end{multline*}
where $C>0$ does not depend on $h$. Now note that
\[
\norm{F_h}_{L^2\Omega^1(M)} \leq C \big( \|g_p^{-1}-g_F^{-1}\|_{C^2(M)})\|v\|_{H^2\Omega^1(M)} + |\Lambda_j(\Omega_h) - \Lambda_j(h))| \norm{v}_{L^2\Omega^1(M)} \big)
\]
and 
\[
\norm{q_h}_{L^2(\p \Omega_h)} \leq C \, \|g^{-1}_p-g^{-1}_F\|_{C^2(M)}\, \norm{v}_{H^2(\Omega_h)}
\]
by the Trace inequality and the definition of $q_h$. Note that the constant $C>0$ needs to be possibly redefined, but it still does not depend on $h$. This concludes the proof in view of \eqref{quasi_isom} and of the convergence of the eigenvalues established in Section \ref{sec:main}.

\end{proof}

\begin{rem}
Concerning stronger notions of convergence, it is natural to expect that the resolvent operator of the Maxwell's system converges to that of the (Dirichlet) Laplacian on $\Sigma$. We believe that this is the case. However, the proof of this fact is beyond the scope of the paper which is devoted to the analysis of the eigenvalues and the consequences for the spectral geometry of this operator. Although the convergence in the case of the product metric could be achieved with standard arguments, the convergence in the Euclidean setting is more delicate. Indeed, the natural extension operator, taking a function $w$ on $\Sigma$ to a $1$-form $wdt$ in $M$, does not produce, in general, co-closed forms; and even more dramatically, the projection of $wdt$ on the space of co-closed forms may depend on the small parameter $h$.
\end{rem}

\appendix

\section{Full Hodge Laplacian spectrum with relative conditions on product manifolds}\label{sec:full}

We have seen that the Maxwell's eigenvalues coincide with the eigenvalues of the Hodge Laplacian with relative conditions restricted to the subspace of co-closed differential forms. On a compact Riemannian manifold $(M,g)$, the eigenvalue problem for the Hodge Laplacian with relative conditions acting on $p$ forms reads
\begin{equation}\label{eq:full:hodge}
    \begin{cases}
\Delta\omega=\lambda\omega\,, & {\rm in\ }M\\
i^*\omega=i^*\delta\omega=0\,, & {\rm on\ }\partial M.
    \end{cases}
\end{equation}
When $(M,g)$ is an Euclidean domain $\Omega$, identifying vectors and $1$-form, problem \eqref{eq:full:hodge} reads
\begin{equation}\label{eq:full:R3}
    \begin{cases}
{\rm curl}\,{\rm curl}E-\nabla{\rm div}E=\lambda E\,, & {\rm in\ }\Omega\,,\\
\nu\times E={\rm div}E=0\,, & {\rm on\ }\partial \Omega.
    \end{cases}
\end{equation}

It is well-known that if $p+q=r$, and if $\alpha$ is a $p$-form and $\beta$ is a $q$-form, then the Laplacian acting on the $r$-form $\alpha\wedge\beta$ on a product manifold $M\times N$ splits as follows (K\"unneth formula):
$$
\Delta(\alpha\wedge\beta)=\alpha\wedge\Delta\beta+\alpha\wedge\Delta\beta.
$$
In the case of closed manifolds $M,N$, the Hodge Laplacian spectrum of $r$ forms is then given by summing $p$-eigenvalues of $M$ and $q$-eigenvalues of $N$, for any choices of $p,q$ such that $p+q=r$. In the case of $1$ forms, the spectrum is then given by sums of Laplacian eigenvalues of $M$ and Hodge Laplacian eigenvalues on $1$-forms on $N$, and sums  of Laplacian eigenvalues of $N$ and Hodge Laplacian eigenvalues on $1$-forms on $M$.

We compute now the spectrum of \eqref{eq:full:hodge} for product Riemannian manifolds and $1$-forms, and in particular, for $M=\Sigma\times(0,h)$ and $g=g_p$, where $(\Sigma,g_{\Sigma})$ is a Riemannian surface and $g_p=g_{\Sigma}+dt^2$ is the product metric. Assume first that $\Sigma$ has no boundary. Consider the family
$$
u_j(t)\omega_k(x)\,,\ \ \ j,k\geq 1,
$$
where $u_j$ are the Dirichlet eigenfunctions of $(0,h)$ with eigenvalues $d_j(h)$ and $\omega_k$ are the eigenfunctions of the Hodge Laplacian on $1$-forms on $\Sigma$  with eigenvalues $\mu^1_k$. One checks that these are  eigenfunctions of the Hodge Laplacian on $\Sigma\times(0,h) $ with relative boundary conditions. The corresponding eigenvalues are given by $d_j(h)+\mu_k^1$, $j,k\geq 1$.
Consider now the family
$$
w_k(x)v_j(t)dt\,,\ \ \ j,k\geq 1
$$
where $v_j$ are the Neumann eigenfunctions of $(0,h)$ with eigenvalues $\eta_j(h)$ and $w_k$ are the eigenfunctions of the  Laplacian (on functions) on $\Sigma$ with eigenvalues $\mu_k$. One checks that these are  eigenfunctions of the Hodge Laplacian on $\Sigma\times(0,h) $ with relative boundary conditions. The corresponding eigenvalues are given by $\eta_j(h)+\mu_k$, $j,k\geq 1$. These two families exhaust the entire spectrum. This can be done as in the proofs of Theorems \ref{thm_main_prod} and \ref{thm_main_prod_2} (see also \cite{colbois_p}).

 If $\Sigma$ has a boundary, it is possible to argue in a similar way. Namely, one just needs to choose eigenfunctions in the form $u_j(t)\omega_k(x)$ with $u_j$ Dirichlet eigenfunctions on $(0,h)$ and $\omega_k$ $1$-forms with relative conditions on $\Sigma$, or in the form $w_k(x)v_j(t)dt$ with $w_k(x)$ Dirichlet eigenfunctions on $\Sigma$ and $v_j(t)$ Neumann eigenfunctions on $(0,h)$. 

\medskip
 
Note that the Hodge Laplacian spectrum restricted to co-closed $1$-forms (Maxwell's spectrum) is a subset of the Hodge Laplacian spectrum. However, it is not immediate to recognize the explicit form of the co-closed eigenfunctions as in Theorems \ref{thm_main_prod} and \ref{thm_main_prod_2}. Nevertheless, if we denote by $\Lambda_j(h,g_p)$ the eigenvalues of the (full) Hodge Laplacian on $(M,g_p)$ with relative conditions, then, for all $j\in\mathbb N$,
$$
\lim_{h\to 0^+}\Lambda_j(h,g_p)=\mu_j,
$$
where $\mu_j$ are the eigenvalues of the Laplacian on $(\Sigma,g_{\Sigma})$ (with Dirichlet conditions if $\partial\Sigma\ne\emptyset$). More precisely, for each fixed $j$, there exists $h_j$ sufficiently small such that $\Lambda_j(h,g_p)=\mu_j$ for all $h<h_j$ and the eigenfunctions associated with $\Lambda_j(h,g_p)$ are given by $w_j(x)dt$, with $w_j(x)$ the eigenfunctions of the Laplacian on $\Sigma$ associated with $\mu_j$ (with Dirichlet conditions if $\partial\Sigma\ne\emptyset$). This analysis is actually sufficient to prove Corollaries \ref{cor:prod:1} and \ref{cor:prod:2}, and then Theorem \ref{main_convergence}. However, our method of proof also provides an explicit description of the eigenfunctions on the product manifold, see Theorems \ref{thm_main_prod} and \ref{thm_main_prod_2}.

Finally, we mention that there is a dual set of boundary conditions for the eigenvalue problem for the  Hodge Laplacian. More precisely, we can consider the eigenvalue problem for the Hodge Laplacian with {\it absolute} boundary conditions:
\begin{equation}\label{hodge_abs}
\begin{cases}
\Delta\omega=\lambda\omega\,, & {\rm in\ } M\,,\\
i^*\iota_{\nu}\omega=i^*\iota_{\nu}d\omega=0\,, & {\rm on\ }\partial M\,,
\end{cases}
\end{equation}
where $\iota$ denotes the interior multiplication of differential forms. For $0$-forms (functions), this simply reduces to the Neumann boundary conditions. By the Hodge $\star$ isomorphism, which exchanges the two boundary conditions, we have that
$$
\lambda_{j,p}^R(M,g)=\lambda_{j,n-p}^A(M,g)
$$
where in the above formula, $\lambda_{j,p}^R(M,g)$ and $\lambda_{j,p}^A(M,g)$ denote the eigenvalues of the Hodge Laplacian acting on $p$-forms on the $n$-dimensional Riemannian manifold $(M,g)$ with relative and absolute boundary conditions, respectively. In particular, due to the equality $\la_{j,1}^R = \la^A_{j,2}$ when $n=3$, we conclude that our analysis of the Hodge Laplacian eigenvalues with relative boundary conditions for 1-forms also yields the same results for 2-forms with absolute boundary conditions.  A nice introduction to the Hodge Laplacian on domains with boundary can be found e.g., in \cite{guerini_savo}.

\section*{Acknowledgments} The authors are very thankful to Prof. Alessandro Savo for the numerous discussions on the topic and for reading a preliminary version of the paper, providing helpful advice, and to Prof. Francesco Bei for his helpful advice.

\bibliography{bibliography.bib}

@article {Fil1,
    AUTHOR = {Filonov, N. D.},
     TITLE = {Maxwell operator in a cylinder},
   JOURNAL = {Appl. Anal.},
  FJOURNAL = {Applicable Analysis. An International Journal},
    VOLUME = {103},
      YEAR = {2024},
    NUMBER = {12},
     PAGES = {2141--2174},
      ISSN = {0003-6811,1563-504X},
   MRCLASS = {35F45 (35Q61 47B93 47F10)},
  MRNUMBER = {4774282},
       DOI = {10.1080/00036811.2023.2283132},
       URL = {https://doi.org/10.1080/00036811.2023.2283132},
}

@article {Fil2,
    AUTHOR = {Filonov, N. D.},
     TITLE = {The {M}axwell operator in a cylinder with coefficients that do
              not depend on the transverse variables},
   JOURNAL = {Algebra i Analiz},
  FJOURNAL = {Rossi\u iskaya Akademiya Nauk. Algebra i Analiz},
    VOLUME = {32},
      YEAR = {2020},
    NUMBER = {1},
     PAGES = {187--207},
      ISSN = {0234-0852},
   MRCLASS = {35Q61 (47A10 81Q10)},
  MRNUMBER = {4057880},
       DOI = {10.1090/spmj/1641},
       URL = {https://doi.org/10.1090/spmj/1641},
}

@article {FM24,
    AUTHOR = {Ferraresso, F. and Marletta, M.},
     TITLE = {Essential spectrum for dissipative {M}axwell equations in
              domains with cylindrical ends},
   JOURNAL = {J. Math. Anal. Appl.},
  FJOURNAL = {Journal of Mathematical Analysis and Applications},
    VOLUME = {536},
      YEAR = {2024},
    NUMBER = {1},
     PAGES = {Paper No. 128174, 21},
      ISSN = {0022-247X,1096-0813},
   MRCLASS = {35Q61 (35P05 47A10)},
  MRNUMBER = {4703376},
MRREVIEWER = {Eric\ Stachura},
       DOI = {10.1016/j.jmaa.2024.128174},
       URL = {https://doi.org/10.1016/j.jmaa.2024.128174},
}

@article {BFMT,
    AUTHOR = {B\"ogli, S. and Ferraresso, F. and Marletta, M. and Tretter,
              C.},
     TITLE = {Spectral analysis and domain truncation for {M}axwell's
              equations},
   JOURNAL = {J. Math. Pures Appl. (9)},
  FJOURNAL = {Journal de Math\'ematiques Pures et Appliqu\'ees. Neuvi\`eme
              S\'erie},
    VOLUME = {170},
      YEAR = {2023},
     PAGES = {96--135},
      ISSN = {0021-7824,1776-3371},
   MRCLASS = {35Q61 (35P05 35P15 47A10 47A58)},
  MRNUMBER = {4532961},
       DOI = {10.1016/j.matpur.2022.12.004},
       URL = {https://doi.org/10.1016/j.matpur.2022.12.004},
}

@misc{ZaccArX,
      title={Geometric spectral properties of electromagnetic waveguides, ar{X}iv}, 
      author={Philippe Briet and Maxence Cassier and Thomas Ourmières-Bonafos and Michele Zaccaron},
      year={2025},
      eprint={2508.13591},
      archivePrefix={arXiv},
      primaryClass={math.AP},
      url=https://arxiv.org/abs/2508.13591}

@article {raulot_savo,
    AUTHOR = {Raulot, S. and Savo, A.},
     TITLE = {A {R}eilly formula and eigenvalue estimates for differential
              forms},
   JOURNAL = {J. Geom. Anal.},
  FJOURNAL = {Journal of Geometric Analysis},
    VOLUME = {21},
      YEAR = {2011},
    NUMBER = {3},
     PAGES = {620--640},
      ISSN = {1050-6926,1559-002X},
   MRCLASS = {58J50 (53C24 58J32)},
  MRNUMBER = {2810846},
MRREVIEWER = {Ruth\ Gornet},
       DOI = {10.1007/s12220-010-9161-0},
       URL = {https://doi.org/10.1007/s12220-010-9161-0},
}

@article {guerini_savo_0,
    AUTHOR = {Guerini, Pierre and Savo, Alessandro},
     TITLE = {Eigenvalue and gap estimates for the {L}aplacian acting on
              {$p$}-forms},
   JOURNAL = {Trans. Amer. Math. Soc.},
  FJOURNAL = {Transactions of the American Mathematical Society},
    VOLUME = {356},
      YEAR = {2004},
    NUMBER = {1},
     PAGES = {319--344},
      ISSN = {0002-9947,1088-6850},
   MRCLASS = {58J50 (58J32)},
  MRNUMBER = {2020035},
MRREVIEWER = {Ruth\ Gornet},
       DOI = {10.1090/S0002-9947-03-03336-1},
       URL = {https://doi.org/10.1090/S0002-9947-03-03336-1},
}

@article {savo_DTN,
    AUTHOR = {Raulot, S. and Savo, A.},
     TITLE = {On the spectrum of the {D}irichlet-to-{N}eumann operator
              acting on forms of a {E}uclidean domain},
   JOURNAL = {J. Geom. Phys.},
  FJOURNAL = {Journal of Geometry and Physics},
    VOLUME = {77},
      YEAR = {2014},
     PAGES = {1--12},
      ISSN = {0393-0440,1879-1662},
   MRCLASS = {58J50 (35P15 35R01)},
  MRNUMBER = {3157898},
MRREVIEWER = {Xuerong\ Qi},
       DOI = {10.1016/j.geomphys.2013.11.002},
       URL = {https://doi.org/10.1016/j.geomphys.2013.11.002},
}

@article {miura1,
    AUTHOR = {Miura, Tatsu-Hiko},
     TITLE = {Navier-{S}tokes equations in a curved thin domain, {P}art {I}:
              {U}niform estimates for the {S}tokes operator},
   JOURNAL = {J. Math. Sci. Univ. Tokyo},
  FJOURNAL = {The University of Tokyo. Journal of Mathematical Sciences},
    VOLUME = {29},
      YEAR = {2022},
    NUMBER = {2},
     PAGES = {149--256},
      ISSN = {1340-5705},
   MRCLASS = {76D07 (35Q30 76D05)},
  MRNUMBER = {4429780},
MRREVIEWER = {V\'aclav\ M\'acha},
}

@article {miura2,
    AUTHOR = {Miura, Tatsu-Hiko},
     TITLE = {Navier-{S}tokes equations in a curved thin domain, {P}art
              {II}: {G}lobal existence of a strong solution},
   JOURNAL = {J. Math. Fluid Mech.},
  FJOURNAL = {Journal of Mathematical Fluid Mechanics},
    VOLUME = {23},
      YEAR = {2021},
    NUMBER = {1},
     PAGES = {Paper No. 7, 60},
      ISSN = {1422-6928,1422-6952},
   MRCLASS = {35Q35 (76D03 76D05)},
  MRNUMBER = {4179895},
MRREVIEWER = {Ilia\ S.\ Mogilevski\u i},
       DOI = {10.1007/s00021-020-00534-2},
       URL = {https://doi.org/10.1007/s00021-020-00534-2},
}

@article {post_exner1,
    AUTHOR = {Exner, Pavel and Post, Olaf},
     TITLE = {Convergence of spectra of graph-like thin manifolds},
   JOURNAL = {J. Geom. Phys.},
  FJOURNAL = {Journal of Geometry and Physics},
    VOLUME = {54},
      YEAR = {2005},
    NUMBER = {1},
     PAGES = {77--115},
      ISSN = {0393-0440,1879-1662},
   MRCLASS = {58J50 (31C12 35P05 35P15 58J90 81Q99)},
  MRNUMBER = {2135966},
MRREVIEWER = {Leonid\ Friedlander},
       DOI = {10.1016/j.geomphys.2004.08.003},
       URL = {https://doi.org/10.1016/j.geomphys.2004.08.003},
}

@article {post_exner2,
    AUTHOR = {Exner, Pavel and Post, Olaf},
     TITLE = {Convergence of resonances on thin branched quantum waveguides},
   JOURNAL = {J. Math. Phys.},
  FJOURNAL = {Journal of Mathematical Physics},
    VOLUME = {48},
      YEAR = {2007},
    NUMBER = {9},
     PAGES = {092104, 43},
      ISSN = {0022-2488,1089-7658},
   MRCLASS = {81Q10 (47N50)},
  MRNUMBER = {2355074},
MRREVIEWER = {Virginie\ Bonnaillie No\"el},
       DOI = {10.1063/1.2749703},
       URL = {https://doi.org/10.1063/1.2749703},
}

@article {miura3,
    AUTHOR = {Miura, Tatsu-Hiko},
     TITLE = {Navier-{S}tokes equations in a curved thin domain, {P}art
              {III}: {T}hin-film limit},
   JOURNAL = {Adv. Differential Equations},
  FJOURNAL = {Advances in Differential Equations},
    VOLUME = {25},
      YEAR = {2020},
    NUMBER = {9-10},
     PAGES = {457--626},
      ISSN = {1079-9389},
   MRCLASS = {35Q30 (35B25 76A20 76D05)},
  MRNUMBER = {4149536},
       URL = {https://projecteuclid.org/euclid.ade/1600135339},
}

@article {post_exner3,
    AUTHOR = {Exner, Pavel and Post, Olaf},
     TITLE = {First order operators on shrinking graph-like spaces},
   JOURNAL = {J. Phys. A},
  FJOURNAL = {Journal of Physics. A. Mathematical and Theoretical},
    VOLUME = {58},
      YEAR = {2025},
    NUMBER = {30},
     PAGES = {Paper No. 305202, 31},
      ISSN = {1751-8113,1751-8121},
   MRCLASS = {81Q35 (35R02 58A10)},
  MRNUMBER = {4936314},
}

@article {rub_scha,
    AUTHOR = {Rubinstein, Jacob and Schatzman, Michelle},
     TITLE = {Variational problems on multiply connected thin strips. {I}.
              {B}asic estimates and convergence of the {L}aplacian spectrum},
   JOURNAL = {Arch. Ration. Mech. Anal.},
  FJOURNAL = {Archive for Rational Mechanics and Analysis},
    VOLUME = {160},
      YEAR = {2001},
    NUMBER = {4},
     PAGES = {271--308},
      ISSN = {0003-9527,1432-0673},
   MRCLASS = {35A15 (35P15 35Q40 49S05 58E30)},
  MRNUMBER = {1869667},
MRREVIEWER = {Vicen\c tiu\ D.\ R\u adulescu},
       DOI = {10.1007/s002050100164},
       URL = {https://doi.org/10.1007/s002050100164},
}

@article {krej_DN,
    AUTHOR = {Krej\v{c}i\v{r}\'ik, David},
     TITLE = {Spectrum of the {L}aplacian in a narrow curved strip with
              combined {D}irichlet and {N}eumann boundary conditions},
   JOURNAL = {ESAIM Control Optim. Calc. Var.},
  FJOURNAL = {ESAIM. Control, Optimisation and Calculus of Variations},
    VOLUME = {15},
      YEAR = {2009},
    NUMBER = {3},
     PAGES = {555--568},
      ISSN = {1292-8119,1262-3377},
   MRCLASS = {35P15 (35J05 35P20 47F05)},
  MRNUMBER = {2542572},
MRREVIEWER = {V.\ S.\ Rabinovich},
       DOI = {10.1051/cocv:2008035},
       URL = {https://doi.org/10.1051/cocv:2008035},
}

@article {krej_tus,
    AUTHOR = {Krej\v{c}i\v{r}\'ik, David and Tu\v{s}ek, Mat\v{e}j},
     TITLE = {Location of hot spots in thin curved strips},
   JOURNAL = {J. Differential Equations},
  FJOURNAL = {Journal of Differential Equations},
    VOLUME = {266},
      YEAR = {2019},
    NUMBER = {6},
     PAGES = {2953--2977},
      ISSN = {0022-0396,1090-2732},
   MRCLASS = {35J05 (35B50 35J25)},
  MRNUMBER = {3912674},
MRREVIEWER = {Yehuda\ Pinchover},
       DOI = {10.1016/j.jde.2018.08.053},
       URL = {https://doi.org/10.1016/j.jde.2018.08.053},
}

@article {BCL,
    AUTHOR = {Brandolini, B. and Chiacchio, F. and Langford, J. J.},
     TITLE = {Eigenvalue estimates for {$p$}-{L}aplace problems on domains
              expressed in {F}ermi coordinates},
   JOURNAL = {J. Math. Anal. Appl.},
  FJOURNAL = {Journal of Mathematical Analysis and Applications},
    VOLUME = {540},
      YEAR = {2024},
    NUMBER = {2},
     PAGES = {Paper No. 128616, 21},
      ISSN = {0022-247X,1096-0813},
   MRCLASS = {35P15 (35J92)},
  MRNUMBER = {4764448},
MRREVIEWER = {Josef\ E.\ Greilhuber},
       DOI = {10.1016/j.jmaa.2024.128616},
       URL = {https://doi.org/10.1016/j.jmaa.2024.128616},
}

@article {buoso_ferr,
    AUTHOR = {Buoso, D. and Ferraresso, F.},
     TITLE = {Spectral convergence analysis for the {R}eissner-{M}indlin
              system in any dimension},
   JOURNAL = {J. Differential Equations},
  FJOURNAL = {Journal of Differential Equations},
    VOLUME = {422},
      YEAR = {2025},
     PAGES = {386--425},
      ISSN = {0022-0396,1090-2732},
   MRCLASS = {35J30 (35B27 35P15 35Q74 35R35 74K20)},
  MRNUMBER = {4846129},
       DOI = {10.1016/j.jde.2024.12.025},
       URL = {https://doi.org/10.1016/j.jde.2024.12.025},
}

@article {schatzman,
    AUTHOR = {Schatzman, Michelle},
     TITLE = {On the eigenvalues of the {L}aplace operator on a thin set
              with {N}eumann boundary conditions},
   JOURNAL = {Appl. Anal.},
  FJOURNAL = {Applicable Analysis. An International Journal},
    VOLUME = {61},
      YEAR = {1996},
    NUMBER = {3-4},
     PAGES = {293--306},
      ISSN = {0003-6811,1563-504X},
   MRCLASS = {35J25},
  MRNUMBER = {1618236},
       DOI = {10.1080/00036819608840461},
       URL = {https://doi.org/10.1080/00036819608840461},
}

@article {anne_tak,
    AUTHOR = {Ann\'e, Colette and Takahashi, Junya},
     TITLE = {Small eigenvalues of the {H}odge-{L}aplacian with sectional
              curvature bounded below},
   JOURNAL = {Ann. Global Anal. Geom.},
  FJOURNAL = {Annals of Global Analysis and Geometry},
    VOLUME = {68},
      YEAR = {2025},
    NUMBER = {1},
     PAGES = {Paper No. 1, 15},
      ISSN = {0232-704X,1572-9060},
   MRCLASS = {58J50 (35P15 53C21 58C40)},
  MRNUMBER = {4922831},
       DOI = {10.1007/s10455-025-10005-4},
       URL = {https://doi.org/10.1007/s10455-025-10005-4},
}

@article {anne_colb,
    AUTHOR = {Ann\'e, Colette and Colbois, Bruno},
     TITLE = {Spectre du laplacien agissant sur les {$p$}-formes
              diff\'erentielles et \'ecrasement d'anses},
   JOURNAL = {Math. Ann.},
  FJOURNAL = {Mathematische Annalen},
    VOLUME = {303},
      YEAR = {1995},
    NUMBER = {3},
     PAGES = {545--573},
      ISSN = {0025-5831,1432-1807},
   MRCLASS = {58G25},
  MRNUMBER = {1355003},
MRREVIEWER = {Matthias\ Lesch},
       DOI = {10.1007/BF01461004},
       URL = {https://doi.org/10.1007/BF01461004},
}

@article {anne_post,
    AUTHOR = {Ann\'e, Colette and Post, Olaf},
     TITLE = {Wildly perturbed manifolds: norm resolvent and spectral
              convergence},
   JOURNAL = {J. Spectr. Theory},
  FJOURNAL = {Journal of Spectral Theory},
    VOLUME = {11},
      YEAR = {2021},
    NUMBER = {1},
     PAGES = {229--279},
      ISSN = {1664-039X,1664-0403},
   MRCLASS = {58J50 (35P15 53C23 58J32)},
  MRNUMBER = {4233210},
MRREVIEWER = {Nikolaos\ Roidos},
       DOI = {10.4171/jst/340},
       URL = {https://doi.org/10.4171/jst/340},
}

@incollection {grieser,
    AUTHOR = {Grieser, Daniel},
     TITLE = {Thin tubes in mathematical physics, global analysis and
              spectral geometry},
 BOOKTITLE = {Analysis on graphs and its applications},
    SERIES = {Proc. Sympos. Pure Math.},
    VOLUME = {77},
     PAGES = {565--593},
 PUBLISHER = {Amer. Math. Soc., Providence, RI},
      YEAR = {2008},
   MRCLASS = {58J50 (35P05 81Q35)},
  MRNUMBER = {2459891},
MRREVIEWER = {Karl Michael Schmidt},
       DOI = {10.1090/pspum/077/2459891},
       URL = {https://doi.org/10.1090/pspum/077/2459891},
}

@article {krej_2,
    AUTHOR = {Krej\v{c}i\v{r}\'{\i}k, D. and Raymond, N. and Royer, J. and Siegl, P.},
     TITLE = {Reduction of dimension as a consequence of norm-resolvent
              convergence and applications},
   JOURNAL = {Mathematika},
  FJOURNAL = {Mathematika. A Journal of Pure and Applied Mathematics},
    VOLUME = {64},
      YEAR = {2018},
    NUMBER = {2},
     PAGES = {406--429},
      ISSN = {0025-5793},
   MRCLASS = {81Q15 (35P05 35P20 35Q41 58J50 81Q10 81Q20)},
  MRNUMBER = {3798605},
       DOI = {10.1112/S0025579318000013},
       URL = {https://doi.org/10.1112/S0025579318000013},
}

@article {bor_freitas,
    AUTHOR = {Borisov, Denis and Freitas, Pedro},
     TITLE = {Asymptotics of {D}irichlet eigenvalues and eigenfunctions of
              the {L}aplacian on thin domains in {$\mathbb R^d$}},
   JOURNAL = {J. Funct. Anal.},
  FJOURNAL = {Journal of Functional Analysis},
    VOLUME = {258},
      YEAR = {2010},
    NUMBER = {3},
     PAGES = {893--912},
      ISSN = {0022-1236},
   MRCLASS = {35P15 (35J05)},
  MRNUMBER = {2558181},
MRREVIEWER = {V. S. Rabinovich},
       DOI = {10.1016/j.jfa.2009.07.014},
       URL = {https://doi.org/10.1016/j.jfa.2009.07.014},
}

@article {ArrVil2,
    AUTHOR = {Arrieta, Jos\'{e} M. and Villanueva-Pesqueira, Manuel},
     TITLE = {Elliptic and parabolic problems in thin domains with doubly
              weak oscillatory boundary},
   JOURNAL = {Commun. Pure Appl. Anal.},
  FJOURNAL = {Communications on Pure and Applied Analysis},
    VOLUME = {19},
      YEAR = {2020},
    NUMBER = {4},
     PAGES = {1891--1914},
      ISSN = {1534-0392},
   MRCLASS = {35B27 (35B40 35B41 35J25 35K20 74K10 74Q05)},
  MRNUMBER = {4153493},
       DOI = {10.3934/cpaa.2020083},
       URL = {https://doi.org/10.3934/cpaa.2020083},
}

@article {arrieta_mpereira,
    AUTHOR = {Arrieta, Jos\'{e} M. and Nakasato, Jean Carlos and Pereira,
              Marcone Corr\^{e}a},
     TITLE = {The {$p$}-{L}aplacian equation in thin domains: the unfolding
              approach},
   JOURNAL = {J. Differential Equations},
  FJOURNAL = {Journal of Differential Equations},
    VOLUME = {274},
      YEAR = {2021},
     PAGES = {1--34},
      ISSN = {0022-0396},
   MRCLASS = {35B25 (35B40 35J92 49J45)},
  MRNUMBER = {4186652},
       DOI = {10.1016/j.jde.2020.12.004},
       URL = {https://doi.org/10.1016/j.jde.2020.12.004},
}

@incollection{colbois_p,
 author = {Colbois, Bruno},
 title = {The spectrum of {Laplacian} acting on differential {{\(p\)}}-forms},
 booktitle = {S\'eminaire de th\'eorie spectrale et g\'eom\'etrie. Ann\'ee 1995-1996},
 pages = {25--37},
 year = {1996},
 publisher = {St. Martin d'H{\`e}res: Univ. de Grenoble I, Institut Fourier},
 language = {French},
 keywords = {58J50,53C21},
 url = {https://eudml.org/doc/114390},
 zbMATH = {1004853},
 Zbl = {0886.58113}
}

@book{schwarz_hodge,
 author = {Schwarz, G{\"u}nter},
 title = {Hodge decomposition. {A} method for solving boundary value problems},
 fseries = {Lecture Notes in Mathematics},
 series = {Lect. Notes Math.},
 issn = {0075-8434},
 volume = {1607},
 isbn = {3-540-60016-7},
 year = {1995},
 publisher = {Berlin: Springer Verlag},
 language = {English},
 doi = {10.1007/BFb0095978},
 keywords = {58-02,58A14,58J32,31B20,35F15,35J40},
 zbMATH = {783743},
 Zbl = {0828.58002}
}

@article{fri_gaf,
 author = {Friedrichs, K. O.},
 title = {Differential forms on {Riemannian} manifolds},
 fjournal = {Communications on Pure and Applied Mathematics},
 journal = {Commun. Pure Appl. Math.},
 issn = {0010-3640},
 volume = {8},
 pages = {551--590},
 year = {1955},
 language = {English},
 doi = {10.1002/cpa.3160080408},
 zbMATH = {3112139},
 Zbl = {0066.07504}
}

@article{gaf1,
 author = {Gaffney, Matthew P.},
 title = {The harmonic operator for exterior differential forms},
 fjournal = {Proceedings of the National Academy of Sciences of the United States of America},
 journal = {Proc. Natl. Acad. Sci. USA},
 issn = {0027-8424},
 volume = {37},
 pages = {48--50},
 year = {1951},
 language = {English},
 doi = {10.1073/pnas.37.1.48},
 url = {europepmc.org/articles/pmc1063298},
 zbMATH = {3064414},
 Zbl = {0042.10205}
}

@article{gaf2,
 author = {Gaffney, Matthew P.},
 title = {Hilbert space methods in the theory of harmonic integrals},
 fjournal = {Transactions of the American Mathematical Society},
 journal = {Trans. Am. Math. Soc.},
 issn = {0002-9947},
 volume = {78},
 pages = {426--444},
 year = {1955},
 language = {English},
 doi = {10.2307/1993072},
 zbMATH = {3109742},
 Zbl = {0064.34303}
}

@article{mitrea,
 author = {Mitrea, Marius},
 title = {Dirichlet integrals and {Gaffney}-{Friedrichs} inequalities in convex domains},
 fjournal = {Forum Mathematicum},
 journal = {Forum Math.},
 issn = {0933-7741},
 volume = {13},
 number = {4},
 pages = {531--567},
 year = {2001},
 language = {English},
 doi = {10.1515/form.2001.021},
 keywords = {35B45,58J32,35B65,46E35,58A15},
 url = {semanticscholar.org/paper/eb614499270b822ef8b8f0f0d51c652b924319bd},
 zbMATH = {1598945},
 Zbl = {0980.35040}
}

@article {weber,
    AUTHOR = {Weber, Ch.},
     TITLE = {A local compactness theorem for {M}axwell's equations},
   JOURNAL = {Math. Methods Appl. Sci.},
  FJOURNAL = {Mathematical Methods in the Applied Sciences},
    VOLUME = {2},
      YEAR = {1980},
    NUMBER = {1},
     PAGES = {12--25},
      ISSN = {0170-4214,1099-1476},
   MRCLASS = {78A25 (35P25 78A45)},
  MRNUMBER = {561375},
MRREVIEWER = {D.\ L.\ Colton},
       DOI = {10.1002/mma.1670020103},
       URL = {https://doi.org/10.1002/mma.1670020103},
}

@incollection {coda,
    AUTHOR = {Costabel, Martin and Dauge, Monique},
     TITLE = {Maxwell eigenmodes in product domains},
 BOOKTITLE = {Maxwell's equations---analysis and numerics},
    SERIES = {Radon Ser. Comput. Appl. Math.},
    VOLUME = {24},
     PAGES = {171--198},
 PUBLISHER = {De Gruyter, Berlin},
      YEAR = {[2019] \copyright 2019},
      ISBN = {978-3-11-054264-6; 978-3-11-054361-2; 978-3-11-054269-1},
   MRCLASS = {78A25 (35P05 35Q61)},
  MRNUMBER = {4398317},
}

@article{lam_zacca,
	author = {{Krejčiřík, David} and {Lamberti, Pier Domenico} and {Zaccaron, Michele}},
	title = {A note on the failure of the {F}aber–{K}rahn inequality for the vector {L}aplacian},
	DOI= "10.1051/cocv/2025008",
	url= "https://doi.org/10.1051/cocv/2025008",
	journal = {ESAIM: COCV},
	year = 2025,
	volume = 31,
	pages = "21",
}

@article {savo_convex,
    AUTHOR = {Savo, Alessandro},
     TITLE = {Hodge-{L}aplace eigenvalues of convex bodies},
   JOURNAL = {Trans. Amer. Math. Soc.},
  FJOURNAL = {Transactions of the American Mathematical Society},
    VOLUME = {363},
      YEAR = {2011},
    NUMBER = {4},
     PAGES = {1789--1804},
      ISSN = {0002-9947,1088-6850},
   MRCLASS = {58J50 (58J32)},
  MRNUMBER = {2746665},
MRREVIEWER = {Ruth\ Gornet},
       DOI = {10.1090/S0002-9947-2010-04844-5},
       URL = {https://doi.org/10.1090/S0002-9947-2010-04844-5},
}

@incollection {guerini_savo,
    AUTHOR = {Guerini, Pierre and Savo, Alessandro},
     TITLE = {The {H}odge {L}aplacian on manifolds with boundary},
 BOOKTITLE = {S\'eminaire de {T}h\'eorie {S}pectrale et {G}\'eom\'etrie.
              {V}ol. 21. {A}nn\'ee 2002--2003},
    SERIES = {S\'emin. Th\'eor. Spectr. G\'eom.},
    VOLUME = {21},
     PAGES = {125--146},
 PUBLISHER = {Univ. Grenoble I, Saint-Martin-d'H\`eres},
      YEAR = {2003},
   MRCLASS = {58J32 (58J50 58J60)},
  MRNUMBER = {2052829},
MRREVIEWER = {Ruth\ Gornet},
}

@article {brisson,
    AUTHOR = {Brisson, Jade},
     TITLE = {Tubular excision and {S}teklov eigenvalues},
   JOURNAL = {J. Geom. Anal.},
  FJOURNAL = {Journal of Geometric Analysis},
    VOLUME = {32},
      YEAR = {2022},
    NUMBER = {5},
     PAGES = {Paper No. 166, 24},
      ISSN = {1050-6926},
   MRCLASS = {58J50 (35P15)},
  MRNUMBER = {4396791},
MRREVIEWER = {Xiaolong Li},
       DOI = {10.1007/s12220-022-00905-3},
       URL = {https://doi.org/10.1007/s12220-022-00905-3},
}

@preamble{
   "\def\cprime{$'$} "
}

@article{bu_co_sh,
title = {On traces for $H(curl,\Omega)$ in Lipschitz domains},
journal = {Journal of Mathematical Analysis and Applications},
volume = {276},
number = {2},
pages = {845-867},
year = {2002},
issn = {0022-247X},
doi = {https://doi.org/10.1016/S0022-247X(02)00455-9},
url = {https://www.sciencedirect.com/science/article/pii/S0022247X02004559},
author = {A. Buffa and M. Costabel and D. Sheen}
}

@article{courtois_holes,
title = {Spectrum of Manifolds with Holes},
journal = {Journal of Functional Analysis},
volume = {134},
number = {1},
pages = {194-221},
year = {1995},
issn = {0022-1236},
doi = {https://doi.org/10.1006/jfan.1995.1142},
url = {https://www.sciencedirect.com/science/article/pii/S0022123685711421},
author = {G. Courtois}
}

@book {chavel,
    AUTHOR = {Chavel, Isaac},
     TITLE = {Eigenvalues in {R}iemannian geometry},
    SERIES = {Pure and Applied Mathematics},
    VOLUME = {115},
      NOTE = {Including a chapter by Burton Randol,
              With an appendix by Jozef Dodziuk},
 PUBLISHER = {Academic Press, Inc., Orlando, FL},
      YEAR = {1984},
     PAGES = {xiv+362},
      ISBN = {0-12-170640-0},
   MRCLASS = {58G25 (35P99 53C20)},
  MRNUMBER = {768584},
MRREVIEWER = {G{\'e}rard Besson},
}
\bibliographystyle{abbrv}
\end{document}